\newtheorem{theorem}{Theorem}[section]
\newtheorem{lemma}[theorem]{Lemma}
\theoremstyle{definition}
\newtheorem{remark}[theorem]{Remark}
\numberwithin{equation}{section}
\def\divv{\text{div}}
\def\<{\langle}
\def\>{\rangle}
\def\N{\mathbb{N}}
\def\limn{\lim_{n\to\infty}}
\def\limsupn{\limsup_{n\to\infty}}
\def\liminfn{\liminf_{n\to\infty}}
\def\limm{\lim_{m\to\infty}}
\def\div{\mbox{div}}
\definecolor{purple}{rgb}{0.4, 0.0, 0.4}
\begin{document}


\begin{center}
\large \bf Finite element approximation of source term identification with TV-regularization
\end{center}

\vspace{0.3cm}

\centerline {\bf Michael Hinze$^a$\let\thefootnote\relax\footnote{Email: michael.hinze@uni-hamburg.de ~$\cdot$~ quyen.tran@uni-goettingen.de} ~$\cdot$~ Tran Nhan Tam Quyen$^{b,*}$\footnote{$^*$Corresponding author}}

{\small $^a$Department of Mathematics, University of Hamburg, Bundesstr. 55, 20146 Hamburg, Germany\\
$^b$Institute for Numerical and Applied Mathematics, University of Goettingen, Lotzestr. 16-18, 37083 Goettingen, Germany
}

\vspace{0.5cm}

{\small {\bf Abstract:}
In this paper we investigate the problem of recovering the source $f$ in the elliptic system
\begin{equation*}
\begin{aligned}
-\nabla \cdot \big(\alpha \nabla u \big) + \beta u &= f \mbox{~in~} \Omega,  \\
\alpha \nabla u \cdot \vec{n} +\sigma u &= j \mbox{~on~} \partial\Omega  
\end{aligned}
\end{equation*}
from an observation $z$ of the state $u$ on a part $\Gamma$ of the boundary $\partial\Omega$, where the functionals $\alpha,\beta,\sigma$ and $j$ are given. For the particular interest in reconstructing probably discontinuous sources, we use the standard least squares method with the total variation regularization, i.e. we consider a minimizer of the minimization problem
$$
\min_{f\in F_{ad}} J(f), \quad J(f) := \frac{1}{2}\|u(f)-z\|^2_{\Gamma} + \rho TV(f) \eqno \left(\mathcal{P}\right)
$$
as reconstruction. Here $u(f)$ denotes the unique weak solution of the above elliptic system which depends on the source term $f$, $TV(f)$ is the total variation of $f$, $\rho>0$ is the regularization parameter, the admissible set $F_{ad} := \left\{f\in BV(\Omega) ~|~ -\infty<\underline{f} \le f(x) \le \overline{f} <\infty \quad\mbox{for a.e. in} \quad \Omega\right\}$
with $\underline{f}$, $\overline{f}$ being given constants, and $BV(\Omega)$ is the Banach space of all bounded total variation functionals. We approximate problem $\left(\mathcal{P}\right)$ with piecewise linear and continuous finite elements, where $u^h \in \mathcal{V}^h_1$ denotes the corresponding finite element approximation of $u$. This leads to the minimization problem
$$
\min_{f\in F^h_{ad}} J^h(f), \quad J^h(f) := \frac{1}{2}\|u^h(f)-z\|^2_{\Gamma} + \rho TV(f), \eqno \big(\mathcal{P}^h\big)
$$
where $F^h_{ad} = F_{ad} \cap \mathcal{V}^h_1$. We in Theorems \ref{odinh1}, \ref{stability2} provide the numerical analysis for the discrete solutions $f^h$ of $\big(\mathcal{P}^h\big)$, and also propose an algorithm to stably solve this discrete minimization problem, where we are lead by the algorithmic developments of \cite{bartels-12,tian-yuan-16}. In particular we prove that the iteration sequence $\big(f^h_n\big)_n$ generated by this algorithm converges to a minimizer of $\big(\mathcal{P}^h\big)$, and that convergence measure of the kind
\begin{align*}
\|f^h_{n+1}-f^h_n\|^2 =\mathcal{O}\left( \frac{1}{n}\right)
\end{align*}
is satisfied. Finally, a numerical experiment is presented to illustrate our theoretical findings.
}

{\small {\bf Key words and phrases:} 
Inverse source problem, boundary observation, total variation regularization, ill-posedness, finite element method, stability and convergence, elliptic boundary value problem.}

{\small {\bf AMS Subject Classifications:} 35R25; 47A52; 35R30; 65J20; 65J22.}

\section{Introduction}

Let $\Omega$ be an open bounded connected set of $\mathbb{R}^d,
~d\ge 2$ with the polygonal boundary $\partial \Omega$. In this paper we investigate the inverse problem of identifying the {\it source term} $f \in L^2(\Omega)$ in the elliptic system 
\begin{equation}\label{17-5-16ct1}
\begin{aligned}
-\nabla \cdot \big(\alpha \nabla u \big) + \beta u &= f \quad\mbox{in}\quad \Omega,  \\
\alpha \nabla u \cdot \vec{n} +\sigma u &= j \quad\mbox{on}\quad \partial\Omega  
\end{aligned}
\end{equation}
from a boundary measurement $z\in L^2(\Gamma)$ of the exact data $g^\dag := u_{|\Gamma}$, where $\vec{n}$ is the unit outward normal on $\partial\Omega$ and $\Gamma\subset\partial\Omega$ is a relatively open subset of the boundary. 

In the system \eqref{17-5-16ct1} the Robin boundary condition $j\in H^{-1/2}(\partial\Omega) := \big(H^{1/2}(\partial\Omega)\big)^*$ and special functionals $\alpha, \beta, \sigma$ are assumed to be given, where $\sigma \in L^\infty(\partial\Omega)$ with $\sigma(x)\ge 0$ a.e. on $\partial\Omega$, $\beta \in L^\infty(\Omega)$ with $\beta(x)\ge0$ a.e. in $\Omega$ and $\alpha :=  \left(\alpha_{rs}\right)_{1\le r, s\le d} \in {L^{\infty}(\Omega)}^{d \times d}$ is a symmetric diffusion matrix satisfying the uniformly elliptic condition
\begin{align}
\alpha(x)\xi \cdot\xi = \sum_{1\le r,s\le d} \alpha_{rs}(x)\xi_r\xi_s \ge \underline{\alpha} |\xi|^2 \mbox{~a.e. in~} \Omega \label{27-5-19ct1}
\end{align}
for all $\xi = \left(\xi_r\right)_{1\le r\le d} \in \mathbb{R}^d$ with some constant $\underline{\alpha} >0$.

We start with some notations. The expressions $(\cdot,\cdot)_\Omega$ and $(\cdot,\cdot)_{\partial\Omega}$ stand respectively for the scalar products of the Lebesgue spaces $L^2(\Omega)$ and $L^2(\partial\Omega)$, while $[\cdot,\cdot]_{\partial\Omega}$ denotes the dual pair $(\cdot,\cdot)_{\big(H^{-1/2}(\partial\Omega), H^{1/2}(\partial\Omega)\big)}$. Let
\begin{align}\label{30-11-16ct1}
a(u,v) := (\alpha \nabla u, \nabla v)_\Omega + (\beta u,v)_\Omega + (\sigma u, v)_{\partial\Omega} \quad \mbox{and} \quad l^f_j(v) := (f,v)_\Omega + [ j, v]_{\partial\Omega},
\end{align}
where $u,v\in H^1(\Omega)$.

We assume that $\beta(x) \ge \beta_0 >0$ a.e. in $\Omega$ or $\sigma(x)\ge \sigma_0 >0$  a.e. on $\partial\Omega$. Then, there exist positive constants $c_1, c_2$ such that
\begin{align}\label{18-10-16ct1}
c_1\|u\|^2_{1,\Omega} \le a(u,u) \le c_2\|u\|^2_{1,\Omega}
\end{align}
for all $u\in H^1(\Omega)$, where $\|u\|_{1,\Omega} := \left( (\nabla u, \nabla u)_\Omega + ( u,u)_\Omega \right)^{1/2}$ denotes the usual norm of the Sobolev space $H^1(\Omega)$ (cf.\ \cite{mik78}). Thus, the expression $a(u,v)$
generates a scalar product on the space
$ H^1(\Omega)$ equivalent to the usual one.  Consequently, for each $f\in L^2(\Omega)$ the Robin boundary value problem \eqref{17-5-16ct1}
defines a unique weak solution $u$ in the sense that $u := u_j(f) := u(f) \in H^1(\Omega)$ such that the variational equation
\begin{align}\label{17-10-16ct2}
a(u,v)  = l^f_j(v) 
\end{align}
is satisfied for all $v\in H^1(\Omega)$. Furthermore, the estimate
\begin{align}\label{17-10-16ct4}
\|u\|_{1,\Omega} \le \frac{1}{c_1}\max\left(1, \|\gamma\|_{ \mathcal{L}\left(H^1(\Omega), H^{1/2}(\partial\Omega)\right) }\right)\left(\|j\|_{H^{-1/2}(\partial\Omega)} + \|f\|_\Omega\right)
\end{align}
holds true, where $\|f\|_\Omega := \|f\|_{L^2(\Omega)}$ and $\gamma : H^1(\Omega) \to H^{1/2}(\partial\Omega)$ denotes the continuous Dirichlet trace operator. Recall that $\gamma : H^1(\Omega) \to L^2(\partial\Omega)$ is compact (cf.,\ e.g., \cite{lady,Mclean}) and there is a positive constant $c_\gamma$ such that
\begin{align}\label{17-10-16ct4*}
\|\gamma u\|_{\partial\Omega} \le c_\gamma\|u\|_{1,\Omega}
\end{align}
for all $u\in H^1(\Omega)$.
 
In case $\beta=0$ and $\sigma =0$, by the Poincar\'e-Friedrichs inequality (cf.,\ e.g., \cite{Pechstein}) 
$$C_\Omega(v,v)_\Omega \le (\nabla v,\nabla v)_\Omega \quad\mbox{for all}\quad v \in H^1_\diamond(\Omega) := \left\{ u \in H^1(\Omega) ~|~ (u,1)_{\Omega} =0\right\}$$ 
for some constant $C_\Omega >0$ depending only on $\Omega$, the expression $(\alpha \nabla u, \nabla v)_\Omega$ is a scalar product on the space
$ H^1_\diamond(\Omega)$ which is equivalent to the usual one. Therefore, in view of Riesz's representation theorem, for each $f\in L^2(\Omega)$ the problem \eqref{17-5-16ct1} also has a unique weak solution $u :=u_j(f) := u(f) \in H^1_\diamond(\Omega)$ which is defined via the variational equation $(\alpha \nabla u, \nabla v)_\Omega = l^f_j(v)$ for all $v \in H^1_\diamond
(\Omega)$ and further satisfied the estimate $\|u\|_{1,\Omega} \le C\left(\|j\|_{H^{-1/2}(\partial\Omega)} + \|f\|_\Omega\right)$. In the sequel we thus consider the case $\inf_\Omega \beta(x) + \inf_{\partial\Omega}\sigma(x)>0$ only. All results in present paper are still valid for the case $\beta =\sigma
=0$.

The inverse problem is stated as follows:
\begin{center}
{\it Given a boundary measurement $z\in L^2(\Gamma)$ of the exact $g^\dag :=u_{|\Gamma}$ of \eqref{17-5-16ct1}, find $f\in L^2(\Omega)$.}
\end{center}
For this purpose and with the particular interest in reconstructing probably discontinuous sources, we use the standard least squares method with the total variation regularization, i.e. we consider a minimizer of the minimization problem
$$
\min_{f\in F_{ad}} J(f), \quad J(f) := \frac{1}{2}\|u(f)-z\|^2_{\Gamma} + \rho TV(f) \eqno \left(\mathcal{P}\right)
$$
as reconstruction. Here $\rho\in (0,1)$ is the regularization parameter and
$$F_{ad} := \left\{f\in BV(\Omega) ~|~ -\infty<\underline{f} \le f(x) \le \overline{f} <\infty \quad\mbox{for a.e. in} \quad \Omega\right\}$$
with $\underline{f}$, $\overline{f}$ being given constants. We note that the lower and upper bound imposing upon the admissible set $F_{ad}$ guarantees the existence of a minimizer to $\left(\mathcal{P}\right)$ (cf.\ \cite{acar-vogel,Ca99,chavent-kunisch,vogel}). 

Let $u^h$ be the approximation of $u$
in  the  finite dimensional space $\mathcal{V}^h_1 := \left\{v^h\in C(\overline\Omega)
~|~{v^h}_{|T} \in \mathcal{P}_1(T), ~~\forall
T\in \mathcal{T}^h\right\}$ of piecewise linear, continuous finite elements. We then consider the {\it discrete} total variation regularized problem corresponding to $\left(\mathcal{P}\right)$ which is given by the following minimization problem
$$
\min_{f\in F^h_{ad}} J^h(f), \quad J^h(f) := \frac{1}{2}\|u^h(f)-z\|^2_{\Gamma} + \rho TV(f), \eqno \big(\mathcal{P}^h\big),
$$
where $F^h_{ad} := F_{ad} \cap \mathcal{V}_1^h$. Let us point to the reader here that a discrete total variation approach with finite elements has been recently proposed for imaging in \cite{her19}.
Likewise $\left(\mathcal{P}\right)$, we can show that the problem $\big(\mathcal{P}^h\big)$ admits a minimizer $f^h$ for each $h>0$. However, due to the lack of strict convexity of the cost functional, a solution of $\big(\mathcal{P}^h\big)$ may be non-unique. 

As $\rho$ is fixed, we show that every solution sequence $(f^h)_h$ of the problems $\big(\mathcal{P}^h\big)_h$ has a subsequence which converges to a solution of the problem $\left(\mathcal{P}\right)$, in $L^s(\Omega)$-norm as well as in the total variation, where $\forall s\in [1,\infty)$ (see \S3, Theorem \ref{odinh1}). Furthermore, in case $h \to 0$, $\|g^\dag - z\|_{\Gamma}\le \delta_z \to 0$ and $\rho = \rho(h,\delta_z)$ is chosen in a suitable way, every sequence of solutions to $\big(\mathcal{P}^h\big)_h$ can be extracted a subsequence that converges to a sought source with the total variation-minimizing property (see \S 3, Theorem \ref{stability2}). To the best of the author's knowledge those results are new for problem $\big(\mathcal{P}\big)$.

For the numerical solution of problem $\big(\mathcal{P}^h\big)$, we adopt the  linearized primal-dual algorithm proposed in \cite{tian-yuan-16}. For the convenience of the reader we in the following sketch this approach. Starting with the equation
$$\mathbb{T} : BV(\Omega) \to L^2(\Omega)$$
with $\mathbb{T}$ being a bounded linear operator, the authors have considered the unconstrained minimization problem
\begin{align}\label{24-5-19ct1}
\inf_{e \in BV(\Omega)} E(e), \quad E(e) := \frac{1}{2}\|\mathbb{T}e-y\|^2_{\Omega} + \rho TV(e),
\end{align}
where $y$ is the given data. Due to the $TV$-dual representation approach developed by \cite{bartels-12,bartels-15,bartels-16,ChPo11}, \eqref{24-5-19ct1} is rewritten as a saddle-point problem
\begin{align}\label{24-5-19ct2}
\inf_{e \in BV(\Omega)} E(e) = \inf_{e \in BV(\Omega)} \sup_{p \in L^1(\Omega;\mathbb{R}^d)} \mathcal{E}(e,p),
\end{align}
where
\begin{align*}
\mathcal{E}(e,p) := \frac{1}{2}\|\mathbb{T}e-y\|^2_{\Omega} + \rho (\nabla e,p)_\Omega - I_{\mathcal{B}_1(L^1(\Omega;\mathbb{R}^d))}(p),
\end{align*}
$\mathcal{B}_1(L^1(\Omega;\mathbb{R}^d)) := \left\{p = (p_j)_{j=1}^d \in L^1(\Omega;\mathbb{R}^d) ~|~ \|p\|_\infty := \max_{j=1,...,d} \|p_j\|_{L^\infty(\Omega)} \le 1\right\}$,
and $I_{\mathcal{B}_1(L^1(\Omega;\mathbb{R}^d))}$ denotes the indicator function of the set $\mathcal{B}_1(L^1(\Omega;\mathbb{R}^d))$. Using the piecewise constant finite element space $\mathcal{V}_0^h := \left\{v^h\in L^1(\Omega)
~|~{v^h}_{|T} =\mbox{~constant},~\forall
T\in \mathcal{T}^h\right\}$, problem \eqref{24-5-19ct2} is then approximated in the $\left(\mathcal{V}^h_1,\mathcal{V}^h_0\right)$-space according to
\begin{align}\label{24-5-19ct3}
\inf_{e^h \in \mathcal{V}^h_1} \sup_{p^h \in (\mathcal{V}^h_0)^d} 
\left( \frac{1}{2}\|\mathbb{T}e^h - y\|^2_{\Omega} + \rho (\nabla e^h,p^h)_\Omega - I_{\mathcal{B}_1(L^1(\Omega;\mathbb{R}^d))}(p^h)\right).
\end{align}
For the numerical solution of this problem the scheme 
\begin{align}
	  e^h_{n+1} &= \arg \min_{e^h \in \mathcal{V}^h_1} \left\{\left(e^h,  \mathbb{T}^*(\mathbb{T}e^h_n - y)\right)_\Omega +\rho\left( \nabla e^h, p^h_n\right)_\Omega +\frac{1}{2\tau} \|e^h - e^h_n\|^2_\Omega\right\} \label{13-9-18ct1*}\\
	  &\widetilde{e}^h_{n+1} = 2e^h_{n+1}-e^h_n \label{13-9-19ct2*}\\
	  p^h_{n+1} &= \arg \max_{p^h \in (\mathcal{V}^h_0)^d} \left\{ \rho \left( \nabla \widetilde{e}^h_{n+1}, p^h \right)_\Omega - I_{\mathcal{B}_1\left( (\mathcal{V}^h_0)^d\right)}(p^h) -\frac{\theta}{2\tau}\|p^h - p^h_n\|^2_\Omega\right\}   \label{13-9-18ct3*}
      \end{align}
is proposed in \cite[Algorithm 1]{tian-yuan-16}, where the parameters $\tau, \theta$ are chosen suitably. Furthermore, we note that the solution $p^h_{n+1}$ to the sub-problem \eqref{13-9-18ct3*} is given by the explicit form (see \cite{bartels-12,tian-yuan-16})
\begin{align*}
p^h_{n+1} = \frac{p^h_n + \frac{\tau\rho}{\theta} \nabla \widetilde{e}^h_{n+1}}{\max\{1, |p^h_n + \frac{\tau\rho}{\theta} \nabla \widetilde{e}^h_{n+1}|\}}.
\end{align*}
In our framework we consider a constrained minimization problem, i.e. the linear space $\mathcal{V}^h_1$ in \eqref{13-9-18ct1*} is replaced by the bound-constrained set $F^h_{ad}$, with the forward operator being affine linear arising from solutions of elliptic PDEs, and data taken on a part of the boundary. With $\mu^h_0 := (f^h_0, p^h_0) \in F^h_{ad} \times (\mathcal{V}^h_0)^d$ denoting the initial guess the resulting algorithm in our notation is given by
\begin{align}
	  f^h_{n+1} &= \arg \min_{f^h \in F^h_{ad}} \left\{\left( f^h,    u^h_a(f^h_n) \right)_\Omega +\rho\left( \nabla f^h, p^h_n\right)_\Omega +\frac{1}{2\tau} \|f^h - f^h_n\|^2_\Omega\right\} \label{13-9-18ct1}\\
	  &\widetilde{f}^h_{n+1} = 2f^h_{n+1}-f^h_n \label{13-9-19ct2}\\
	  p^h_{n+1} &= \arg \max_{p^h \in (\mathcal{V}^h_0)^d} \left\{ \rho \left( \nabla \widetilde{f}^h_{n+1}, p^h \right)_\Omega - I_{\mathcal{B}_1\left( (\mathcal{V}^h_0)^d\right)}(p^h) -\frac{\theta}{2\tau}\|p^h - p^h_n\|^2_\Omega\right\} \label{13-9-18ct3}
      \end{align}	 
with $u^h_a$ denoting the discrete adjoint state associated to $u^h$, see \S \ref{3-6-19ct1} for details.
We mention that the sub-problem \eqref{13-9-18ct1} admits a solution (cf.\ Remark \ref{31-5-19ct1})
\begin{align*}
f^h_{n+1} = \mathcal{P}^{L^2}_{F^h_{ad}} \left( f^h_n - \tau (u^h_a(f^h_n) - \rho \div~p^h_n)\right),
\end{align*}
where $\mathcal{P}^{L^2}_{F^h_{ad}} : L^2(\Omega) \to F^h_{ad}$ denotes the $L^2$-projection on the set $F^h_{ad}$ and $-\div$ is the adjoint operator of $\nabla$ defined by 
\begin{align}\label{6-12-16ct3}
(-\div~q^h, g^h) = (q^h, \nabla g^h)_\Omega := \sum_{T\in \mathcal{T}^h} \sum_{j=1}^d |T| {q^h_j}_{|_T} {\frac{\partial g^h}{\partial x_j}}{|_T}
\end{align}
for all $q^h := (q^h_j)_{j=1}^d \in (\mathcal{V}^h_0)^d$ and $g^h \in \mathcal{V}^h_1$.

We then show in \S \ref{primal-dual} that the iteration sequence $\left( \mu^h_n\right)_n := \left( f^h_n, p^h_n\right)_n$ admits a subsequence $\left( \mu^h_{n_k}\right)_k$ which converges in the finite dimensional space $\mathcal{V}^h_1 \times (\mathcal{V}^h_0)^d$ to an element $\mu^h_* := (f^h_*,p^h_*) \in F^h_{ad} \times \partial TV(f^h_*)$, where $\mu^h_*$ satisfies the first-order optimality condition for the considered minimization problem and $f^h_*$ is a minimizer of $(\mathcal{P}^h)$. Moreover, we derive the identity
\begin{align*}
\|\mu^h_{n+1}-\mu^h_n\|^2_{\mathcal{V}^h_1 \times (\mathcal{V}^h_0)^d} =\mathcal{O}\left( \frac{1}{n}\right) \text{ for } n \rightarrow \infty,
\end{align*}
which  is to be expected for primal-dual-type algorithms by the results of e.g. \cite{nes05,tian-yuan-16}. We note that using $L^1-$sparsity regularization instead of the $TV-$ regularization the rate $\mathcal{O}\left( \frac{1}{n^2}\right)$ is shown in \cite{nes13}.

Source identification appears in many practical applications, like e.g. electroencephalography, geophysical prospecting and pollutant detection. Its PDE model-based treatment attracted great attention during the last decades, where we refer to e.g. \cite{acosta,bao,chavent2009,Engl_Hanke_Neubauer,hama,isakov89,Tarantola} and the references given there. Let us briefly refer to publications related to our work. In \cite{farcas,matsumoto1,matsumoto2} dual reciprocity boundary element methods are used to treat inverse source problems numerically. The situation where a priori knowledge of the identified source is available, e.g. as a point source, characteristic functional or a harmonic functional, is investigated in \cite{badia1,badia2,batoul,kunisch,ring,trlep}. Recently, by using a so-called energy functional method combined with Tikhonov regularization the authors  in \cite{HHQ17} studied a numerical method for the source identification problem from single Cauchy data.  Finite element analysis  of  the  problem  of  simultaneously  identifying  the source  term  and  coefficients from {\it distributed} observations  can  be  found  in \cite{quyen}. For the treatment of coefficient identification problems employing $TV$-regularization techniques we refer to \cite{ChTa03,ChZo99,Gu90,haqu11,haqu12jmaa,TaLi07}. 

In the present work we study the finite element approximation and algorithm development for the identification problem of a source term in an elliptic PDE from partial observations of the state on the boundary. To the best of our knowledge numerical analysis for the problem setting we consider is not yet available, although there are many contributions to the numerical and algorithmic treatment of source identification problems. The main results of our paper are contained in Theorem \ref{odinh1}, where we prove convergence of the finite element discrete regularized approximations to a solution of the corresponding continuous regularized problem as the mesh size of triangulations tends to zero, in Theorem \ref{stability2}, where we establish convergence to a sought source functional when the regularization parameter approaches to zero with a suitable coupling of noise level and mesh size, and in Theorem \ref{PDA}, where we prove convergence for the iterates of our discrete algorithm.

To conclude this introduction, we briefly present the space of functionals with bounded total variation, for more details the reader may consult \cite{Am00,attouch,EvGa92,Giusti}. A scalar functional $f\in
L^{1}(\Omega)$ is said to be of bounded total variation if
\begin{align*}
TV(f) := \int_{\Omega}\left|\nabla f\right|
:=\sup\left\{\int_{\Omega} f\divv~ G ~\big|~ G\in C^1_c(\Omega)^d,~
\left| G(x)\right|_{\infty} \le
1,~x\in\Omega\right\}<\infty,
\end{align*}
where $\left|\cdot\right|_{\infty}$ denotes the $\ell_{\infty}$-norm
on $\mathbb{R}^d$ defined by
$\left|x\right|_{\infty}=\max\limits_{1\le i\le d}\left|x_i\right|$
and $C^1_c(\Omega)$ is the space of continuously differentiable functionals with compact support in $\Omega$.
The space of all functionals in $L^{1}(\Omega)$ with bounded total
variation 
$$BV(\Omega)=\left\{ f\in L^{1}(\Omega) ~\big|~
TV(f)<\infty\right\}$$
is a {\it non-reflexive} Banach space equipped with the norm
$$
\| f \|_{BV(\Omega)}
:= \| f \|_{L^1(\Omega)} + TV(f).
$$
Furthermore, if $\Omega$ is an open bounded set with
Lipschitz boundary, then $W^{1,1}(\Omega)\varsubsetneq BV(\Omega)$. 

\section{Preliminaries}\label{3-6-19ct1}

Now we summarize some useful properties of the source-to-solution operator $u=u(f)$. First, we note that the decomposition
\begin{align}\label{30-11-16ct2}
u(f) = \overline{u}(f) +\widehat{u}
\end{align}
holds, where $\overline{u}(f)$ and $\widehat{u}$ are the solutions to the variational equations
\begin{align}\label{30-11-16ct3}
a(\overline{u}(f),v) = l^f_0(v) \quad \mbox{and} \quad a(\widehat{u},v) = l^0_j(v)
\end{align}
for all $v\in H^1(\Omega)$, respectively. Thus, the operator $u : L^2(\Omega)\rightarrow H^1(\Omega)$ is continuously Fr\'echet differentiable on $L^2(\Omega)$. For each $f \in L^2(\Omega)$ the action of the Fr\'echet derivative in the direction $\xi \in L^2(\Omega)$ satisfies the equation
\begin{align}\label{30-11-16ct4}
u'(f)\xi = \overline{u}(\xi).
\end{align}
Next, along with \eqref{17-5-16ct1} we consider the {\it adjoint} problem
\begin{equation}\label{17-10-16ct1*}
\begin{aligned}
-\nabla \cdot \big(\alpha \nabla u_a \big) + \beta u_a &= 0 \quad\mbox{in}\quad \Omega, \\
\alpha \nabla u_a \cdot \vec{n} +\sigma u_a &= u(f) -z \quad\mbox{on}\quad \Gamma,\\ 
\alpha \nabla u_a \cdot \vec{n} +\sigma u_a &= 0 \quad\mbox{on}\quad \partial\Omega\setminus\Gamma
\end{aligned}
\end{equation}
which also admits a unique weak solution $u_a(f) \in H^1(\Omega)$ defined via the variational equation
\begin{align}\label{17-10-16ct2*}
a(u_a(f),v)  = (u(f)-z, v)_{\Gamma}
\end{align}
for all $v\in H^1(\Omega)$. Furthermore, for $f,\, \xi \in L^2(\Omega)$ the Fr\'echet differential ${u_a}'(f)\xi$ is the unique weak solution in $H^1(\Omega)$ of the variational equation
\begin{align}\label{30-11-16ct5}
a \left({u_a}'(f)\xi, v\right) = (\overline{u}(\xi),v)_\Gamma \quad\mbox{for all}\quad v\in H^1(\Omega).
\end{align}
Now, for all $f \in L^2(\Omega)$ letting
$$
L(f) := \frac{1}{2}\|u(f)-z\|^2_{\Gamma},
$$
a computation for all $\xi \in L^2(\Omega)$ shows, by \eqref{17-10-16ct2*} and \eqref{30-11-16ct4},
\begin{align*}
L'(f)\xi =(u(f)-z, u'(f)\xi)_\Gamma = a(u_a(f),u'(f)\xi) = a(u_a(f),\overline{u}(\xi))
\end{align*}
and so, by \eqref{30-11-16ct5},
\begin{align}\label{1-12-16ct4}
L''(f)(\xi,\xi) = a({u_a}'(f)\xi,\overline{u}(\xi)) = (\overline{u}(\xi),\overline{u}(\xi))_\Gamma \ge 0.
\end{align}
We however note that the bilinear form $L''(f)$ is in general not positive definite. In fact, considering the particular case $\Omega := \{x = (x_1,x_2)\in\mathbb{R}^2 ~|~ |x| = (x^2_1,x^2_2)^{1/2} < \frac{\sqrt{2}}{2}\}$, $\alpha$ is the unit $d\times d$-matrix, $\beta=0$ and $\sigma = 0$, we have that $\xi = 16 - 64|x|^2 \neq 0$, but it gives $\overline{u}(\xi) = 4|x|^2 - 4|x|^4 - 1$ satisfying $\overline{u}(\xi)_{|\Gamma} = 0$. Therefore, the cost functional $J$ of the problem $(\mathcal{P})$ is {\it not strictly convex}. Furthermore, since the $TV$-regularization term is a semi-norm on the space $BV(\Omega)$ only, it is also {\it not differentiable}.

\begin{lemma}[\cite{Giusti}]\label{bv1}

(i) Let $\left(f_n\right)_n$ be a bounded sequence in the $BV(\Omega)$-norm.
Then a subsequence which is denoted by the same symbol and an element
$f\in BV(\Omega)$ exist such that $\left(f_n\right)_n$ converges to $f$ in the $L^1(\Omega)$-norm.

(ii) Let $\left(f_n\right)_n$ be a sequence in $BV(\Omega)$ converging to
$f$ in  the $L^1(\Omega)$-norm. Then $f \in BV(\Omega)$ and
\begin{align}\label{27-7-16ct1}
TV(f) \le \liminfn TV(f_n).
\end{align}
\end{lemma}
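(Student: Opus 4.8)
This is the classical compactness and lower-semicontinuity theorem for $BV$ functions, so my strategy is to reduce both parts to standard facts about the total variation functional, quoting the approximation by smooth functions and the dual definition of $TV$ already recorded in the introduction.

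For part (i), the plan is to exploit the approximation of $BV$ functions by smooth functions together with Rellich--Kondrachov compactness. First I would recall that for each $f \in BV(\Omega)$ there is a sequence of smooth (or $W^{1,1}$) functions converging to $f$ in $L^1$ with their gradient norms converging to $TV(f)$; this lets me treat the hypothesis ``$(f_n)_n$ bounded in $BV$'' as effectively a uniform $L^1$-plus-gradient bound. The key tool is then the embedding $BV(\Omega) \hookrightarrow L^1(\Omega)$, which is compact when $\Omega$ is a bounded Lipschitz (here polygonal) domain. Concretely, one mollifies each $f_n$ at a scale $\varepsilon$, uses the uniform $TV$-bound to control the $L^1$-distance between $f_n$ and its mollification uniformly in $n$, and applies Fr\'echet--Kolmogorov (equi-integrability and uniform control of translates) to extract an $L^1$-convergent subsequence with limit $f \in L^1(\Omega)$. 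The uniform bound on the translates $\|f_n(\cdot + y) - f_n\|_{L^1}$ by $|y|\,TV(f_n)$ is the heart of the compactness argument.

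For part (ii), the plan is to pass to the limit directly in the dual (variational) definition of $TV$ stated just above this lemma. Fix any test vector field $G \in C^1_c(\Omega)^d$ with $|G(x)|_\infty \le 1$. Since $f_n \to f$ in $L^1(\Omega)$ and $\operatorname{div} G$ is bounded with compact support, I would write
\begin{align*}
\int_\Omega f \,\operatorname{div} G \,dx = \limn \int_\Omega f_n \,\operatorname{div} G \,dx \le \liminfn TV(f_n),
\end{align*}
where the last inequality holds because each $\int_\Omega f_n \operatorname{div} G \le TV(f_n)$ by definition of the supremum. Taking the supremum over all admissible $G$ on the left gives $TV(f) \le \liminfn TV(f_n)$, and in particular $TV(f) < \infty$ so $f \in BV(\Omega)$ (we already know $f \in L^1$). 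This lower-semicontinuity step is essentially automatic once one commits to the dual definition.

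The main obstacle is entirely in part (i): establishing the compact embedding $BV(\Omega) \hookrightarrow L^1(\Omega)$, i.e. converting a uniform $TV$-bound into the equicontinuity-of-translates estimate needed for Fr\'echet--Kolmogorov. Part (ii) is a soft limiting argument. Given that the lemma is attributed to \cite{Giusti}, I expect the author simply to cite the standard reference rather than reproduce the mollification and compactness estimates, and I would do the same while indicating the two ingredients above.
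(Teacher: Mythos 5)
Your proposal is correct: the paper gives no proof of Lemma \ref{bv1} and simply cites \cite{Giusti}, exactly as you anticipated, and your sketched ingredients (the translate estimate $\|f_n(\cdot+y)-f_n\|_{L^1}\le |y|\,TV(f_n)$ feeding Fr\'echet--Kolmogorov for part (i), and passing to the limit in the dual definition of $TV$ for part (ii)) are precisely the standard arguments behind that citation.
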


\begin{lemma}\label{weakly conv.}
Assume that the sequence $\left( f_n\right)_n\subset F_{ad}$
converges to an element $f$ in the $L^1(\Omega)$-norm. Then $f\in F_{ad}$, the sequence $\left(u(f_n)\right)_n$ converges to $u(f)$ in the $H^1(\Omega)$-norm and $\left(u(f_n)_{|\Gamma}\right)_n$ converges to $u(f)_{|\Gamma}$ in the $L^2(\Gamma)$-norm as well.
\end{lemma}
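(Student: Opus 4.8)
The plan is to prove the three assertions in turn, the decisive observation being that the box constraints defining $F_{ad}$ let one upgrade the given $L^1(\Omega)$ convergence to $L^2(\Omega)$ convergence, which is precisely the norm in which the forward problem is stable.

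First I would verify $f \in F_{ad}$. Since each $f_n \in F_{ad} \subset BV(\Omega)$ and $f_n \to f$ in $L^1(\Omega)$, Lemma \ref{bv1}(ii) yields $f \in BV(\Omega)$ together with $TV(f) \le \liminf_{n\to\infty} TV(f_n)$. For the pointwise bounds I would pass to a subsequence (not relabelled) along which $f_n \to f$ almost everywhere in $\Omega$; since $\underline{f} \le f_n(x) \le \overline{f}$ holds a.e.\ for every $n$, the a.e.\ limit satisfies $\underline{f} \le f(x) \le \overline{f}$ a.e.\ as well, so that $f \in F_{ad}$.

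The key step is then the upgrade of the mode of convergence. Because $f_n, f \in F_{ad}$ one has $|f_n - f| \le \overline{f} - \underline{f} =: M$ a.e.\ in $\Omega$, and therefore
\[
\|f_n - f\|_\Omega^2 = \int_\Omega |f_n-f|^2 \le M \int_\Omega |f_n - f| = M\,\|f_n - f\|_{L^1(\Omega)} \to 0 ,
\]
so that $f_n \to f$ in $L^2(\Omega)$. This is where the bound constraints are indispensable, since $u(\cdot)$ is stable with respect to the $L^2(\Omega)$-norm of the source but not with respect to the $L^1(\Omega)$-norm; it is the only genuinely load-bearing observation, everything else being elementary energy estimates.

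Finally I would transfer this to the states. Using the decomposition \eqref{30-11-16ct2} together with the linearity of $f \mapsto \overline{u}(f)$ from \eqref{30-11-16ct3}, the difference $w_n := u(f_n) - u(f) = \overline{u}(f_n - f)$ solves $a(w_n, v) = (f_n - f, v)_\Omega$ for all $v \in H^1(\Omega)$. Choosing $v = w_n$, invoking the coercivity in \eqref{18-10-16ct1} and the Cauchy--Schwarz inequality gives
\[
c_1 \|w_n\|_{1,\Omega}^2 \le a(w_n,w_n) = (f_n - f, w_n)_\Omega \le \|f_n-f\|_\Omega\, \|w_n\|_{1,\Omega},
\]
whence $\|u(f_n) - u(f)\|_{1,\Omega} \le c_1^{-1}\|f_n - f\|_\Omega \to 0$, i.e.\ $u(f_n) \to u(f)$ in $H^1(\Omega)$. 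The boundary statement then follows at once from the trace estimate \eqref{17-10-16ct4*}, since
\[
\|u(f_n)_{|\Gamma} - u(f)_{|\Gamma}\|_\Gamma \le \|\gamma w_n\|_{\partial\Omega} \le c_\gamma \|w_n\|_{1,\Omega} \to 0 .
\]
Note that no compactness of the trace is required here, because the $H^1$-convergence obtained above is strong; continuity of $\gamma$ suffices.
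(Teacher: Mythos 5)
Your proposal is correct and follows essentially the same route as the paper: pointwise a.e.\ convergence of a subsequence plus the box constraints to get $f\in F_{ad}$, the bound $\|f_n-f\|_\Omega^2\le C\|f_n-f\|_{L^1(\Omega)}$ to upgrade $L^1$- to $L^2$-convergence, and the coercivity estimate $c_1\|u(f_n)-u(f)\|_{1,\Omega}\le\|f_n-f\|_\Omega$ from the variational equation. Your version is in fact marginally tidier, since after establishing $f\in F_{ad}$ you run the estimate on the whole sequence (avoiding the paper's subsequence-plus-uniqueness-of-limit step), you use the sharp constant $\overline{f}-\underline{f}$ rather than the paper's $2\overline{f}$, and you spell out the trace step that the paper leaves implicit.
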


\begin{proof}
First, by Lemma \ref{bv1}, we have $f\in BV(\Omega)$. Furthermore, a subsequence $\left( f_{n_m}\right)_m$ exists such that 
\begin{align}\label{29-11-16ct1}
\limm|f_{n_m}(x)-f(x)| =0 \quad \mbox{for a.e. in} \quad \Omega.
\end{align}
Since 
\begin{align}\label{29-11-16ct1*}
\underline{f} \le f_{n_m}(x) \le \overline{f} \quad \mbox{for all} \quad m\in \mathbb{N} \quad \mbox{and a.e. in} \quad \Omega,
\end{align}
sending $m$ to $\infty$, we get $\underline{f} \le f(x) \le \overline{f}$ for a.e. in $\Omega$ which implies that $f\in F_{ad}$.
Now, for all $v\in H^1(\Omega)$ and $m\in \mathbb{N}$ we have from \eqref{17-10-16ct2} that
$$a\left(u(f_{n_m}) - u(f),v\right)  = (f_{n_m}-f,v)_\Omega$$
which together with \eqref{18-10-16ct1} yield
$$c_1\left\|u(f_{n_m}) - u(f)\right\|_{1,\Omega} \le \|f_{n_m} - f\|_\Omega \le \sqrt{2\overline{f}} \|f_{n_m} - f\|^{1/2}_{L^1(\Omega)}$$
that deduces
$\limm\left\|u(f_{n_m}) - u(f)\right\|_{1,\Omega}=0$.
Since the limit is unique, the whole sequence $\left(u(f_n)\right)_n$ converges to $u(f)$ in the $H^1(\Omega)$-norm also. This completes the proof.
\end{proof}

\begin{lemma}\label{existance}
The minimization problem $\left(\mathcal{P}\right)$ admits a minimizer.
\end{lemma}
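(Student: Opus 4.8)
The plan is to apply the direct method of the calculus of variations, with the two analytic ingredients---compactness and lower semicontinuity---supplied by Lemmas \ref{bv1} and \ref{weakly conv.}. First I would observe that $J$ is bounded below by $0$, since both the fidelity term $\frac12\|u(f)-z\|^2_\Gamma$ and the regularization term $\rho TV(f)$ are nonnegative. Hence $m := \inf_{f\in F_{ad}} J(f)$ is a finite nonnegative number, and I may choose a minimizing sequence $\left(f_n\right)_n \subset F_{ad}$ with $J(f_n) \to m$.

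Next I would extract a convergent subsequence. The box constraint defining $F_{ad}$ forces $\|f_n\|_{L^1(\Omega)} \le \max\left(|\underline{f}|,|\overline{f}|\right)\,|\Omega|$, so the $L^1$-norms are uniformly bounded; and since $J(f_n)$ converges it is in particular bounded, whence $TV(f_n) \le J(f_n)/\rho$ is bounded as well. Therefore $\left(f_n\right)_n$ is bounded in the $BV(\Omega)$-norm, and Lemma \ref{bv1}(i) yields a subsequence, still denoted $\left(f_n\right)_n$, together with an element $f \in BV(\Omega)$ such that $f_n \to f$ in the $L^1(\Omega)$-norm.

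I would then pass to the limit in each term separately. By Lemma \ref{weakly conv.} the $L^1$-limit $f$ lies in $F_{ad}$ and the traces $u(f_n)_{|\Gamma}$ converge to $u(f)_{|\Gamma}$ in the $L^2(\Gamma)$-norm, so the fidelity term converges, i.e. $\|u(f_n)-z\|_\Gamma \to \|u(f)-z\|_\Gamma$. For the regularization term, the lower semicontinuity statement \eqref{27-7-16ct1} of Lemma \ref{bv1}(ii) gives $TV(f) \le \liminfn TV(f_n)$. Combining these,
\[
J(f) = \frac12\|u(f)-z\|^2_\Gamma + \rho TV(f) \le \liminfn \left(\frac12\|u(f_n)-z\|^2_\Gamma + \rho TV(f_n)\right) = \liminfn J(f_n) = m.
\]
Since $f \in F_{ad}$ we also have $J(f) \ge m$, so that $J(f) = m$ and $f$ is the desired minimizer.

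As for the difficulty, there is no genuine obstacle here, precisely because the two potentially delicate points have already been isolated in the preliminary lemmas. The only thing one must respect is that $BV(\Omega)$ is non-reflexive, so compactness cannot be obtained from weak sequential compactness of norm-bounded sets; it must instead come through the compact embedding $BV(\Omega)\hookrightarrow L^1(\Omega)$ encoded in Lemma \ref{bv1}(i), which is exactly why both the box constraint (providing the $L^1$ bound) and the $TV$-penalty (providing the seminorm bound) are needed to render the minimizing sequence $BV$-bounded. The fact that $TV$ is only lower semicontinuous under $L^1$-convergence, rather than continuous, is the reason the argument first produces the inequality $J(f)\le m$, which is then upgraded to equality by the defining property of the infimum.
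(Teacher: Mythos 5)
Your proof is correct and follows exactly the route the paper intends: the paper omits the details, stating only that the assertion ``follows directly from Lemma \ref{bv1} and Lemma \ref{weakly conv.}'' (with a pointer to the literature), and your argument is precisely the direct-method proof built from those two lemmas --- the box constraint plus the $TV$-bound giving $BV$-boundedness, Lemma \ref{bv1}(i) giving $L^1$-compactness, Lemma \ref{weakly conv.} giving convergence of the fidelity term and closedness of $F_{ad}$, and Lemma \ref{bv1}(ii) giving lower semicontinuity of $TV$. Nothing is missing; your remark on the non-reflexivity of $BV(\Omega)$ correctly identifies why the compactness must come through $L^1$ rather than weak compactness.
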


\begin{proof}
The assertion follows directly from Lemma \ref{bv1} and Lemma \ref{weakly conv.}, therefore omitted here (see \cite{acar-vogel} for details).
\end{proof}

\section{Discretization and convergence}\label{discrete}

Let $\left(\mathcal{T}^h\right)_{0<h<1}$ be a
quasi-uniform family of regular triangulations of the domain $\overline{\Omega}$ with the mesh size $h$ such that each vertex of the polygonal boundary $\partial\Omega$ is a node of $\mathcal{T}^h$ (cf.\ \cite{Brenner_Scott,Ciarlet}).
For the definition of the discretization space of the state
functionals let us denote the piecewise affine, globally continuous finite element space by
\begin{equation*}
\mathcal{V}_1^h := \left\{v^h\in C(\overline\Omega)
~|~{v^h}_{|T} \in \mathcal{P}_1(T), ~~\forall
T\in \mathcal{T}^h\right\},
\end{equation*}
where $\mathcal{P}_1$ consists all polynomial functionals of degree
less than or equal to 1. The piecewise constant finite element space is  denoted by
\begin{equation*}
\mathcal{V}_0^h := \left\{v^h\in L^1(\Omega)
~|~{v^h}_{|T} = \mbox{~constant}, ~~\forall
T\in \mathcal{T}^h\right\}.
\end{equation*}
The admissible set is discretized as
\begin{equation*}
F^h_{ad} := F_{ad} \cap \mathcal{V}_1^h.
\end{equation*}

Similar to the continuous case, we for each $f\in L^2(\Omega)$ have that the equation
\begin{align}
a(u^h,v^h)  = l^f_j(v^h)  \quad \mbox{for all} \quad v^h\in
\mathcal{V}_{1}^h \label{10/4:ct1}
\end{align}
admits a unique solution $u^h :=u^h_j(f) := u^h(f) \in \mathcal{V}_{1}^h$ which further satisfies the estimate
\begin{align}
\|u^h \|_{H^1(\Omega)}\le C \left( \|f\|_{\Omega}+
\|j\|_{H^{-1/2}(\partial\Omega)} \right),  \label{18/5:ct1}
\end{align}
where the constant $C$ is independent of the mesh size $h$. Then the problem $\left(\mathcal{P}\right)$ can be discretized by
$$
\min_{f\in F^h_{ad}} J^h(f), \quad J^h(f) := \frac{1}{2}\|u^h(f)-z\|^2_{\Gamma} + \rho TV(f). \eqno \big(\mathcal{P}^h\big)
$$
which admits a minimizer $f^h$ for each $h>0$. 

Also, the adjoint state $u_a=u_a(f)$ in \eqref{17-10-16ct1*} has the discrete version $u^h_a =u^h_a(f)$ defined via the equation
\begin{align}\label{17-10-16ct2**}
a(u^h_a(f),v^h)  = (u^h(f)-z, v^h)_{\Gamma}
\end{align}
for all $v^h\in \mathcal{V}^h_{1}$.

We now state the following result on the stability of finite element approximations. Here and in the sequel, unless otherwise stated, we indicate by $C$ a generic positive constant which is independent of the mesh size $h$, the regularization parameter $\rho$ and the observation data $z$. 

\begin{theorem}\label{odinh1}
Let $(h_n)_n$ be a sequence with $\limn h_n = 0$ and $\rho$ be a fixed regularization parameter. Assume that
$f^{h_n} \in F^{h_n}_{ad}$ is an arbitrary minimizer of
$\left(\mathcal{P}^{h_n}\right)$ for each $n\in\mathbb{N}$.
Then a subsequence of $\big(f^{h_n}\big)_n$ not relabeled and an element $f \in F_{ad}$ exist such that
\begin{align}\label{eq:conv}
\limn \big\| f^{h_n} - f\big\|_{L^1(\Omega)} = 0 \quad \mbox{and} \quad \limn TV( f^{h_n}) = TV(f).
\end{align}
Furthermore, $f$ is a solution to $\left( \mathcal{P} \right)$. If the uniqueness of the solution to $\left( \mathcal{P} \right)$ is satisfied, then convergence \eqref{eq:conv} holds for the whole sequence.
\end{theorem}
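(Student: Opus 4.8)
The plan is to argue in the spirit of $\Gamma$-convergence: a compactness estimate produces a limit candidate $f$, lower semicontinuity of the two terms of $J$ gives a ``$\liminf$'' inequality, and a recovery (``$\limsup$'') inequality obtained by approximating competitors in $F_{ad}$ with finite element functions closes the argument. First I would establish a uniform $BV$-bound on the minimizers. Since $f^{h_n}\in F_{ad}$ we have $\underline f\le f^{h_n}\le\overline f$ a.e., so $\|f^{h_n}\|_{L^1(\Omega)}$ is bounded by $\max(|\underline f|,|\overline f|)\,|\Omega|$. For the total variation I would test minimality against a constant $c\in[\underline f,\overline f]$, which lies in $F^{h_n}_{ad}$ and satisfies $TV(c)=0$; using the stability bound \eqref{18/5:ct1} for $u^{h_n}(c)$ gives $\rho\,TV(f^{h_n})\le J^{h_n}(f^{h_n})\le J^{h_n}(c)\le C$ uniformly in $n$. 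Hence $(f^{h_n})_n$ is bounded in $BV(\Omega)$, and Lemma \ref{bv1}(i) yields a (non-relabeled) subsequence with $f^{h_n}\to f$ in $L^1(\Omega)$ for some $f\in BV(\Omega)$; passing to the a.e.\ limit of the pointwise bounds (as in Lemma \ref{weakly conv.}) shows $f\in F_{ad}$.

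Next I would prove that the observations converge, namely $u^{h_n}(f^{h_n})_{|\Gamma}\to u(f)_{|\Gamma}$ in $L^2(\Gamma)$. I split
\[
u^{h_n}(f^{h_n})-u(f)=\big(u^{h_n}(f^{h_n})-u^{h_n}(f)\big)+\big(u^{h_n}(f)-u(f)\big).
\]
For the first term, affine-linearity of the source-to-state map in $f$ together with \eqref{18/5:ct1} gives $\|u^{h_n}(f^{h_n})-u^{h_n}(f)\|_{1,\Omega}\le C\|f^{h_n}-f\|_\Omega$; since $|f^{h_n}-f|\le\overline f-\underline f$ a.e., one has $\|f^{h_n}-f\|_\Omega^2\le(\overline f-\underline f)\|f^{h_n}-f\|_{L^1(\Omega)}\to 0$, exactly as in the proof of Lemma \ref{weakly conv.}. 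The second term is the Galerkin error for the fixed source $f\in L^2(\Omega)$, which tends to zero by C\'ea's lemma and density of $\bigcup_h\mathcal V^h_1$ in $H^1(\Omega)$. Combining and using the trace estimate \eqref{17-10-16ct4*} yields the claimed convergence on $\Gamma$, and in particular $\tfrac12\|u^{h_n}(f^{h_n})-z\|^2_\Gamma\to\tfrac12\|u(f)-z\|^2_\Gamma$.

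The decisive step is the recovery inequality: for every competitor $g\in F_{ad}$ I must construct $g^{h_n}\in F^{h_n}_{ad}$ with $\limsupn J^{h_n}(g^{h_n})\le J(g)$. Here the obstacle is approximating a merely $BV$, box-constrained function by continuous piecewise linear functions while simultaneously controlling the total variation and respecting the constraints $\underline f\le\,\cdot\,\le\overline f$. I would first mollify $g$ to obtain $g_\varepsilon\in C^\infty(\overline\Omega)$ with range still in $[\underline f,\overline f]$ (mollification is an averaging, hence preserves the bounds) and with $g_\varepsilon\to g$ in $L^1(\Omega)$ and $TV(g_\varepsilon)\to TV(g)$ (strict convergence in $BV$), and then take the nodal interpolant $I^h g_\varepsilon\in\mathcal V^h_1$, which again respects the box constraints because Lagrange interpolation of an $[\underline f,\overline f]$-valued continuous function stays in $[\underline f,\overline f]$; as $h\to0$ one has $I^h g_\varepsilon\to g_\varepsilon$ in $W^{1,1}(\Omega)$, so $TV(I^h g_\varepsilon)\to TV(g_\varepsilon)$ and $\|I^h g_\varepsilon-g_\varepsilon\|_{L^1(\Omega)}\to0$. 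A diagonal choice $\varepsilon=\varepsilon(h_n)\to0$ then furnishes $g^{h_n}\in F^{h_n}_{ad}$ with $g^{h_n}\to g$ in $L^1(\Omega)$ and $\limsupn TV(g^{h_n})\le TV(g)$; the state convergence of the previous paragraph, applied to $g^{h_n}\to g$, shows $\tfrac12\|u^{h_n}(g^{h_n})-z\|^2_\Gamma\to\tfrac12\|u(g)-z\|^2_\Gamma$, whence $\limsupn J^{h_n}(g^{h_n})\le J(g)$.

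Finally I would assemble the pieces. By minimality of $f^{h_n}$, $J^{h_n}(f^{h_n})\le J^{h_n}(g^{h_n})$ for the recovery sequence of any $g\in F_{ad}$; using the state convergence of the second paragraph and the lower semicontinuity $TV(f)\le\liminfn TV(f^{h_n})$ from Lemma \ref{bv1}(ii) gives
\[
J(f)\le\liminfn J^{h_n}(f^{h_n})\le\limsupn J^{h_n}(g^{h_n})\le J(g),
\]
so $f$ solves $(\mathcal P)$. Taking $g=f$ in the recovery inequality forces $\limn J^{h_n}(f^{h_n})=J(f)$; subtracting the (already established) limit of the data terms isolates $\rho\,TV(f^{h_n})\to\rho\,TV(f)$, i.e.\ $\limn TV(f^{h_n})=TV(f)$, the second assertion in \eqref{eq:conv}. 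If $(\mathcal P)$ has a unique solution, a standard subsequence argument applies: every subsequence of $(f^{h_n})_n$ admits a further subsequence converging in $L^1(\Omega)$, with $TV$-convergence, to the unique minimizer $f$, so the full sequence converges.
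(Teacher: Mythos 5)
Your proof is correct and follows the same $\liminf$/recovery skeleton as the paper's, but it differs in how the two key ingredients are supplied. Where the paper invokes \cite[Lemma 4.6]{HKT} (Lemma \ref{HKTlemma}) as a black box producing, for every $\widehat f\in F_{ad}$, a constrained interpolant $\widehat f^{h}\in F^{h}_{ad}$ with $L^1$-convergence at rate $Ch|\log h|$ and convergence of total variations, you construct the recovery sequence yourself via bound-preserving mollification followed by nodal $\mathcal P_1$-interpolation and a diagonal argument; and where the paper omits the proof of the state-convergence Lemma \ref{30-11-16ct6} as standard, you supply it (affine linearity plus coercivity for the perturbation in the source, C\'ea's lemma plus density for the Galerkin error, then the trace estimate \eqref{17-10-16ct4*}). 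Your derivation of the uniform $BV$ bound by testing minimality against a constant $c\in[\underline f,\overline f]$ (which lies in $F^{h_n}_{ad}$ and has $TV(c)=0$) is actually simpler than the paper's, which obtains the bound by testing against the recovery sequence $\widehat f^{h_n}$ itself. The remaining steps --- extraction of an $L^1$-convergent subsequence with limit in $F_{ad}$, the chain $J(f)\le\liminf_n J^{h_n}(f^{h_n})\le\limsup_n J^{h_n}(g^{h_n})\le J(g)$, the choice $g=f$ to upgrade lower semicontinuity of $TV$ to convergence of $TV$, and the subsequence argument under uniqueness --- coincide with the paper's \eqref{30-11-16ct7} and the discussion following it. What the paper's route buys is the quantitative rate $Ch|\log h|$, which is irrelevant for Theorem \ref{odinh1} but essential in Theorem \ref{stability2} (where $h_n|\log h_n|/\rho_n\to0$ is imposed); what your route buys is self-containedness for the purely qualitative statement at hand.

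One place where your construction is glossed is the mollification step: the assertion that mollification preserves the box constraints and gives $TV(g_\varepsilon)\to TV(g)$ with $g_\varepsilon\in C^\infty(\overline\Omega)$ is correct but not a consequence of naive interior convolution, since near $\partial\Omega$ one must either use the Anzellotti--Giaquinta partition-of-unity construction (which preserves the bounds only up to a truncation fix) or first extend $g$ to $BV(\mathbb R^d)$ by a bound-preserving extension with $|D\tilde g|(\partial\Omega)=0$ and then convolve; without smoothness up to $\overline\Omega$ the nodal interpolant is not even well defined at boundary nodes. This is a standard and fixable technicality --- indeed it is precisely what the cited Lemma \ref{HKTlemma} packages --- so it does not undermine your argument, but it should be stated rather than attributed to ``averaging'' alone.
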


We mention that \eqref{eq:conv} yields the convergence in the $L^s(\Omega)$-norm for all $1\le s<\infty$. In fact, this assertion follows directly from the following estimate
\begin{align*}
\|f^{h_n} -f\|^s_{L^s(\Omega)} = \int_\Omega |f^{h_n} -f|\cdot|f^{h_n} -f|^{s-1} \le \int_\Omega |f^{h_n} -f|\cdot\left(|f^{h_n}|+|f|\right)^{s-1} \le \left(2\overline{f}\right)^{s-1}\|f^{h_n} -f\|_{L^1(\Omega)}.
\end{align*}
To prove Theorem \ref{odinh1} we need the following crucial result.

\begin{lemma} \cite[Lemma 4.6.]{HKT}\label{HKTlemma}
For any fixed $\widehat{f}\in F_{ad}$ an element $\widehat{f}^h \in F^h_{ad}$ exists such that
\begin{align}\label{22-7-16ct5}
\big\|\widehat{f}^h-\widehat{f}\big\|_{L^1(\Omega)} \le Ch|\log h|
\end{align}
and
\begin{align}\label{22-7-16ct6}
\lim_{h\to 0} TV(\widehat{f}^h) = TV(\widehat{f}).
\end{align} 
\end{lemma}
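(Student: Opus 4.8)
The plan is to build $\widehat{f}^h$ by a two-step recovery procedure: first approximate $\widehat{f}$ by a smooth function in the strict (intermediate) topology of $BV(\Omega)$, and then take its finite element interpolant, coupling the mollification radius to the mesh size. Concretely, let $\eta_\epsilon$ be a standard radial mollifier, set $f_\epsilon := \eta_\epsilon * \widehat{f}$ (with the boundary handled as discussed below), and let $I_h : C(\overline{\Omega}) \to \mathcal{V}^h_1$ denote the nodal Lagrange interpolation operator. I would define $\widehat{f}^h := I_h f_{\epsilon(h)}$ for a radius $\epsilon(h)$ to be fixed later, and then check admissibility, the $L^1$-rate \eqref{22-7-16ct5}, and the total variation convergence \eqref{22-7-16ct6}. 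Admissibility is immediate since both operations preserve the pointwise bounds: as $\eta_\epsilon$ is a probability density, $\underline{f} \le \widehat{f} \le \overline{f}$ forces $\underline{f} \le f_\epsilon \le \overline{f}$; and since each nodal value $f_\epsilon(x_i)$ lies in $[\underline{f},\overline{f}]$, the affine interpolant on every simplex stays between its vertex values, so $\underline{f} \le I_h f_\epsilon \le \overline{f}$. Being Lipschitz and piecewise affine, $\widehat{f}^h \in BV(\Omega)\cap \mathcal{V}^h_1$, hence $\widehat{f}^h \in F^h_{ad}$.

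For the quantitative part I would use the standard interior-mollification bounds $\|f_\epsilon - \widehat{f}\|_{L^1(\Omega)} \le C\epsilon\, TV(\widehat{f})$ and $|f_\epsilon|_{W^{2,1}(\Omega)} \le C\epsilon^{-1} TV(\widehat{f})$, together with the strict convergence $\int_\Omega |\nabla f_\epsilon| \to TV(\widehat{f})$ as $\epsilon \to 0$, and the finite element interpolation estimates on the quasi-uniform family $(\mathcal{T}^h)$, namely $\|I_h g - g\|_{L^1(\Omega)} \le Ch^2 |g|_{W^{2,1}(\Omega)}$ and $|I_h g - g|_{W^{1,1}(\Omega)} \le Ch\, |g|_{W^{2,1}(\Omega)}$ (Bramble--Hilbert, valid for $g\in C^\infty(\overline\Omega)$ in any dimension $d\ge 2$). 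Combining the first pair gives $\|\widehat{f}^h - \widehat{f}\|_{L^1(\Omega)} \le \|I_h f_\epsilon - f_\epsilon\|_{L^1(\Omega)} + \|f_\epsilon - \widehat{f}\|_{L^1(\Omega)} \le C\left(h^2\epsilon^{-1} + \epsilon\right) TV(\widehat{f})$. Choosing the coupling $\epsilon = \epsilon(h) := h|\log h|$ yields $h^2\epsilon^{-1} = h/|\log h| \le h|\log h|$, so that $\|\widehat{f}^h - \widehat{f}\|_{L^1(\Omega)} \le Ch|\log h|$, which is \eqref{22-7-16ct5}. (Any coupling with $h \ll \epsilon \to 0$ works for the existence statement; $\epsilon = h|\log h|$ is the mildest divergent choice producing a clean closed-form rate.)

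For \eqref{22-7-16ct6} I would argue by a squeeze. Since $\widehat{f}^h$ is Lipschitz, $TV(\widehat{f}^h) = \int_\Omega |\nabla \widehat{f}^h|$, and the triangle inequality for $|\nabla(\cdot)|$ gives $\int_\Omega |\nabla \widehat{f}^h| \le \int_\Omega |\nabla f_\epsilon| + |I_h f_\epsilon - f_\epsilon|_{W^{1,1}(\Omega)} \le \int_\Omega |\nabla f_\epsilon| + Ch\epsilon^{-1} TV(\widehat{f})$. With $\epsilon = h|\log h|$ the error term equals $C|\log h|^{-1} TV(\widehat{f}) \to 0$, while $\int_\Omega |\nabla f_\epsilon| \to TV(\widehat{f})$ by strict convergence, so $\limsup_{h\to 0} TV(\widehat{f}^h) \le TV(\widehat{f})$. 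Conversely, since $\widehat{f}^h \to \widehat{f}$ in $L^1(\Omega)$ by \eqref{22-7-16ct5}, the lower semicontinuity of the total variation (Lemma \ref{bv1}(ii)) gives $TV(\widehat{f}) \le \liminf_{h\to 0} TV(\widehat{f}^h)$. The two inequalities together establish \eqref{22-7-16ct6}.

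The main obstacle is the boundary treatment in the mollification step: the plain convolution $\eta_\epsilon * \widehat{f}$ is only defined on the interior strip $\{x\in\Omega : \operatorname{dist}(x,\partial\Omega) > \epsilon\}$, so on the polygonal domain $\Omega$ one must either first extend $\widehat{f}$ to a $BV$ functional on $\mathbb{R}^d$ preserving the bounds $[\underline{f},\overline{f}]$, or use the partition-of-unity with interior rescaling construction of strict approximation (as in \cite{Am00,Giusti}). Whichever route is taken, it must be carried out so that the \emph{quantitative} scalings $\|f_\epsilon - \widehat{f}\|_{L^1(\Omega)} \lesssim \epsilon\, TV(\widehat{f})$ and $|f_\epsilon|_{W^{2,1}(\Omega)} \lesssim \epsilon^{-1} TV(\widehat{f})$ survive up to $\partial\Omega$; the boundary strip of width $\epsilon$ is precisely where a spurious logarithmic loss can enter, and controlling it is what pins down the rate in \eqref{22-7-16ct5}. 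Once these boundary-uniform bounds are in hand, the remainder reduces to the standard mollification and interpolation estimates quoted above.
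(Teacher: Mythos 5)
You cannot be faulted for not finding this proof in the paper: the authors do not prove the lemma at all, citing \cite[Lemma 4.6]{HKT}, and your mollify-then-interpolate construction with the coupling $\epsilon=h|\log h|$ is exactly the argument behind that cited result — the $L^1$ rate from $C(h^2\epsilon^{-1}+\epsilon)\,TV(\widehat{f})$, the bound $TV(\widehat{f}^h)\le\int_\Omega|\nabla f_\epsilon|+Ch\epsilon^{-1}TV(\widehat{f})$ for the limsup, and $L^1$-lower semicontinuity of $TV$ for the liminf, with admissibility preserved because mollification and affine nodal interpolation both respect the bounds $[\underline{f},\overline{f}]$. Your boundary discussion also identifies the right repair: on the polygonal (hence Lipschitz) domain one extends $\widehat{f}$ to a function in $BV(\mathbb{R}^d)$ whose derivative measure does not charge $\partial\Omega$ (e.g.\ by bi-Lipschitz reflection across the boundary, followed by truncation at $\underline{f},\overline{f}$, which does not increase $TV$); with such an extension both quantitative scalings and the convergence $\int_\Omega|\nabla f_\epsilon|\to TV(\widehat{f})$ hold up to the boundary, whereas a generic extension would leave a jump on $\partial\Omega$ and spoil the limsup inequality.

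One step, however, is wrong as stated: the claim that $\|I_h g-g\|_{L^1(\Omega)}\le Ch^2|g|_{W^{2,1}(\Omega)}$ and $|I_h g-g|_{W^{1,1}(\Omega)}\le Ch|g|_{W^{2,1}(\Omega)}$ hold ``for $g\in C^\infty(\overline\Omega)$ in any dimension $d\ge2$.'' For $d\ge3$ the space $W^{2,1}(T)$ does not embed into $C(\overline{T})$, and these inequalities fail with a constant uniform in $g$ even over smooth $g$: a smooth bump of height $1$ and width $\delta$ centered at a node has $|g|_{W^{2,1}}\sim\delta^{d-2}\to0$ while $\|I_hg\|_{L^1}\sim h^d$ stays fixed, so smoothness of the individual function cannot rescue the estimate — Bramble--Hilbert genuinely needs the embedding, which holds only for $d=2$. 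The gap is repairable precisely because your $g=f_{\epsilon(h)}$ is a mollification at scale $\epsilon\ge h|\log h|\ge h$: elementwise one has $\|I_hf_\epsilon-f_\epsilon\|_{L^1(T)}\le Ch^2|T|\,\|D^2f_\epsilon\|_{L^\infty(T)}$ together with $\|D^2f_\epsilon\|_{L^\infty(T)}\le C\epsilon^{-1-d}\,|D\widehat{f}|\big(T+B_{2\epsilon}\big)$, and summing over $T$ using quasi-uniformity and the $O\big((\epsilon/h)^d\big)$ overlap of the fattened elements recovers $\|I_hf_\epsilon-f_\epsilon\|_{L^1(\Omega)}\le Ch^2\epsilon^{-1}TV(\widehat{f})$, and similarly $Ch\epsilon^{-1}TV(\widehat{f})$ for the $W^{1,1}$ term; alternatively, replace $I_h$ by an averaged quasi-interpolant whose nodal values are local means of $f_\epsilon$, which is $W^{2,1}$-stable without any embedding and still preserves the bounds $[\underline{f},\overline{f}]$. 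With that one substitution your proof is complete and coincides in substance with the cited one; in $d=2$ it is correct as written.
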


\begin{lemma} \label{30-11-16ct6}
Assume that $(f_n)_n \subset F_{ad}$ converges to $f\in F_{ad}$ in the $L^1(\Omega)$-norm. Then the limit
\begin{align} \label{20-5-16ct3}
\limn \|u^{h_n}(f_n) - z\|_\Gamma = \|u(f)-z\|_\Gamma
\end{align}
holds true.
\end{lemma}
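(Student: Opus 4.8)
The plan is to prove the stronger statement that $\|u^{h_n}(f_n) - u(f)\|_\Gamma \to 0$; the asserted convergence of the residual norms \eqref{20-5-16ct3} then follows at once from the reverse triangle inequality, since
\[
\big| \|u^{h_n}(f_n) - z\|_\Gamma - \|u(f) - z\|_\Gamma \big| \le \|u^{h_n}(f_n) - u(f)\|_\Gamma .
\]
By the trace estimate \eqref{17-10-16ct4*} together with $\|\cdot\|_\Gamma \le \|\cdot\|_{\partial\Omega}$, it suffices in turn to show $\|u^{h_n}(f_n) - u(f)\|_{1,\Omega} \to 0$. I would decouple the two effects at play (the perturbed source and the refined mesh) by inserting the intermediate discrete solution $u^{h_n}(f)$ and using the triangle inequality,
\[
\|u^{h_n}(f_n) - u(f)\|_{1,\Omega} \le \|u^{h_n}(f_n) - u^{h_n}(f)\|_{1,\Omega} + \|u^{h_n}(f) - u(f)\|_{1,\Omega} ,
\]
so that the first (data) term is handled by stability on a \emph{fixed} mesh and the second (discretization) term by Galerkin convergence for a \emph{fixed} source.

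For the first term I would exploit the affine-linear dependence of the discrete state on $f$. Subtracting the two instances of \eqref{10/4:ct1} for $f_n$ and $f$ (the boundary contribution $[j,\cdot]_{\partial\Omega}$ cancels) gives $a\big(u^{h_n}(f_n) - u^{h_n}(f), v^h\big) = (f_n - f, v^h)_\Omega$ for all $v^h \in \mathcal{V}^{h_n}_1$. Testing with $v^h = u^{h_n}(f_n) - u^{h_n}(f)$, invoking the coercivity in \eqref{18-10-16ct1} and the Cauchy--Schwarz inequality yields the mesh-independent stability bound $\|u^{h_n}(f_n) - u^{h_n}(f)\|_{1,\Omega} \le c_1^{-1}\|f_n - f\|_\Omega$. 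Since $f_n, f \in F_{ad}$ are uniformly bounded in $[\underline{f},\overline{f}]$, the same interpolation trick already used in the proof of Lemma \ref{weakly conv.} upgrades the hypothesized $L^1$-convergence to $L^2$-convergence, namely $\|f_n - f\|_\Omega \le C\|f_n - f\|^{1/2}_{L^1(\Omega)} \to 0$, so this term vanishes.

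For the second term, observe that $u^{h_n}(f)$ is exactly the Galerkin approximation of $u(f)$ in $\mathcal{V}^{h_n}_1$, whence the Galerkin orthogonality $a\big(u(f) - u^{h_n}(f), v^h\big) = 0$ holds for all $v^h \in \mathcal{V}^{h_n}_1$. Céa's lemma, using the bounds \eqref{18-10-16ct1}, then gives
\[
\|u^{h_n}(f) - u(f)\|_{1,\Omega} \le \frac{c_2}{c_1}\, \inf_{v^h \in \mathcal{V}^{h_n}_1} \|u(f) - v^h\|_{1,\Omega},
\]
and the standard approximation property of the piecewise linear spaces $\mathcal{V}^h_1$ on the quasi-uniform regular triangulations forces the right-hand side to $0$ as $h_n \to 0$. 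Combining the two estimates completes the argument.

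The main obstacle I anticipate is precisely the simultaneity of the two limits $f_n \to f$ and $h_n \to 0$; the intermediate-term decomposition above is designed to neutralize this. The point that must be checked with care is that the stability constant $c_1^{-1}$ appearing in the data term is \emph{independent of the mesh size} (which it is, by \eqref{18-10-16ct1}), so the bound persists uniformly in $n$, and that the Céa/approximation estimate in the discretization term is applied to the \emph{fixed} limit $u(f) \in H^1(\Omega)$ rather than to the moving targets $u(f_n)$, for which no single approximation rate would be available.
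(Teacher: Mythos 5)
Your proof is correct and complete. The paper in fact omits its own proof of this lemma, calling it "standard arguments," and your argument is precisely that standard argument: the same splitting into a mesh-independent discrete stability bound plus a fixed-source discretization error is used by the authors in the proof of Theorem \ref{stability2}, and the two ingredients you rely on — the $L^1$-to-$L^2$ upgrade via the uniform bounds on $F_{ad}$, and the convergence $\lim_{h\to 0}\big\|u^{h}(f)-u(f)\big\|_{1,\Omega}=0$ — appear in the paper as part of the proof of Lemma \ref{weakly conv.} and as \eqref{21-5-16ct1}, respectively.
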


\begin{proof}
The proof is based on standard arguments, it is therefore omitted here. 
\end{proof}

\begin{proof}[Proof of Theorem \ref{odinh1}]
Let $\widehat{f}\in F_{ad}$ be arbitrary and $\widehat{f}^{h_n}$ be generated from $\widehat{f}$ due to Lemma \ref{HKTlemma}. The optimality of $f^{h_n}$, yields
$$\frac{1}{2} \|u^{h_n}(f^{h_n}) - z\|^2_\Gamma +\rho TV(f^{h_n}) \le \frac{1}{2}\|u^{h_n}(\widehat{f}^{h_n}) - z\|^2_\Gamma +\rho TV(\widehat{f}^{h_n}).$$
By \eqref{18/5:ct1} and \eqref{22-7-16ct6}, we deduce from the last inequality that the sequence $\big(f^{h_n}\big)_n$ is bounded in the $BV(\Omega)$-norm. Then, due to Lemma \ref{bv1}, a subsequence not relabeled and an element $f \in F_{ad}$ exist such that
\begin{align}\label{30-11-16ct8}
\limn \big\| f^{h_n} - f\big\|_{L^1(\Omega)} = 0 \quad \mbox{and} \quad TV(f) \le \liminfn TV( f^{h_n}).
\end{align}
Combining this with Lemma \ref{30-11-16ct6}, we arrive at
\begin{align} \label{30-11-16ct7}
\frac{1}{2}\|u(f)-z\|^2_\Gamma +\rho TV(f) 
&\le  \limn \frac{1}{2}\|u^{h_n}(f^{h_n}) - z\|^2_\Gamma + \liminfn \rho TV( f^{h_n}) \notag\\
&= \liminfn \left( \frac{1}{2}\|u^{h_n}(f^{h_n}) - z\|^2_\Gamma + \rho TV( f^{h_n})\right) \notag\\
&\le \limsupn \left( \frac{1}{2}\|u^{h_n}(f^{h_n}) - z\|^2_\Gamma + \rho TV( f^{h_n})\right)\notag\\
&\le \limsupn \left( \frac{1}{2}\|u^{h_n}(\widehat{f}^{h_n}) - z\|^2_\Gamma +\rho TV(\widehat{f}^{h_n})\right) \notag\\
&= \frac{1}{2}\|u(\widehat{f})-z\|^2_\Gamma +\rho TV(\widehat{f}),
\end{align}
here we used Lemma \ref{HKTlemma} (with noting $\limn h_n|\log h_n| =0$) and Lemma \ref{30-11-16ct6} in the last equation, again. Thus, $f$ is a solution to $\left( \mathcal{P} \right)$. Furthermore, replacing $\widehat{f}$ in \eqref{30-11-16ct7} by $f$, we obtain 
\begin{align*}
\frac{1}{2}\|u(f)-z\|^2_\Gamma +\rho TV(f) 
&= \limsupn \left( \frac{1}{2}\|u^{h_n}(f^{h_n}) - z\|^2_\Gamma + \rho TV( f^{h_n})\right)\\
&= \limn \frac{1}{2}\|u^{h_n}(f^{h_n})- z\|^2_\Gamma + \limsupn \rho TV( f^{h_n})\\
&= \frac{1}{2}\|u(f)-z\|^2_\Gamma+ \limsupn \rho TV( f^{h_n})
\end{align*}
and, together with \eqref{30-11-16ct8}, arrive at
$$TV(f) \le \liminfn TV( f^{h_n}) \le \limsupn TV( f^{h_n}) = TV(f),$$
and so that $TV(f) = \limn TV( f^{h_n})$. The proof is completed.
\end{proof}

In the remaining part of this section we investigate the convergence of discrete regularized approximations to an identified source as $\rho$ is chosen in a suitable way depending on the mesh size and the error level of observations. Before doing so, we introduce the notion of the total variation-minimizing solution of the identification problem.

\begin{lemma}\label{identification_problem}
The problem
$$
\min_{f \in \mathcal{I}\left(g^\dag\right) :=\left\{ f \in F_{ad} ~\big|~ u(f)_{|\Gamma} =  g^\dag \right\} }  TV(f) \eqno\left(\mathcal{IP}\right)
$$
admits a solution, called the {\it total variation-minimizing solution} of the identification problem.
\end{lemma}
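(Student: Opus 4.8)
The plan is to apply the direct method of the calculus of variations, exactly as in the proof of Lemma~\ref{existance}, the only additional ingredient being that the constraint set $\mathcal{I}(g^\dag)$ must be shown stable under the mode of convergence we have at our disposal. First I would record that $\mathcal{I}(g^\dag)$ is nonempty: by definition $g^\dag = u_{|\Gamma}$ is the boundary trace of the state generated by the (assumed admissible) exact source, so at least that source lies in $\mathcal{I}(g^\dag)$. Since every $f \in \mathcal{I}(g^\dag) \subset F_{ad}$ satisfies $TV(f) \ge 0$, the infimum $m := \inf_{f \in \mathcal{I}(g^\dag)} TV(f)$ is a well-defined nonnegative real number.

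Next I would pick a minimizing sequence $(f_n)_n \subset \mathcal{I}(g^\dag)$ with $\limn TV(f_n) = m$. Because each $f_n \in F_{ad}$, the pointwise bounds $\underline{f} \le f_n \le \overline{f}$ together with the boundedness of $\Omega$ give a uniform bound on $\|f_n\|_{L^1(\Omega)}$, while $TV(f_n)$ is bounded since it converges to $m$. Hence $(f_n)_n$ is bounded in the $BV(\Omega)$-norm, and Lemma~\ref{bv1}(i) provides a subsequence (not relabeled) and an element $f \in BV(\Omega)$ with $\limn \|f_n - f\|_{L^1(\Omega)} = 0$.

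It then remains to verify that the limit $f$ is both feasible and optimal. For feasibility I would invoke Lemma~\ref{weakly conv.}: from $f_n \to f$ in $L^1(\Omega)$ it yields $f \in F_{ad}$ together with $u(f_n)_{|\Gamma} \to u(f)_{|\Gamma}$ in $L^2(\Gamma)$. Since $u(f_n)_{|\Gamma} = g^\dag$ for every $n$ and the limit in $L^2(\Gamma)$ is unique, passing to the limit gives $u(f)_{|\Gamma} = g^\dag$, so $f \in \mathcal{I}(g^\dag)$; this is precisely the closedness of the constraint set under $L^1$-convergence. For optimality I would use the lower semicontinuity of the total variation, Lemma~\ref{bv1}(ii), to obtain $TV(f) \le \liminfn TV(f_n) = m$. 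Combined with $f \in \mathcal{I}(g^\dag)$ and the definition of $m$, this forces $TV(f) = m$, so $f$ solves $\left(\mathcal{IP}\right)$.

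The only genuinely delicate point is the stability of the constraint $u(f)_{|\Gamma} = g^\dag$ under the weak-type ($L^1$) convergence that $BV$-compactness affords; everything hinges on the continuity of the source-to-boundary-trace map with respect to $L^1$-convergence of sources, which is exactly the content already secured in Lemma~\ref{weakly conv.}. Apart from this, one should not overlook the nonemptiness of $\mathcal{I}(g^\dag)$: strictly speaking it is a hypothesis on the data, namely that the exact $g^\dag$ is attainable by an admissible source, and I would prefer to state it explicitly rather than treat it as automatic from the setup.
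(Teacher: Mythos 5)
Your proof is correct and is precisely the ``standard argument'' the paper invokes without writing out: the direct method using the $BV$-compactness and lower semicontinuity of Lemma \ref{bv1} together with the $L^1$-to-trace continuity of Lemma \ref{weakly conv.} to get closedness of $\mathcal{I}(g^\dag)$. Your remark that nonemptiness of $\mathcal{I}(g^\dag)$ is really a hypothesis on the data (the exact source must lie in $F_{ad}$) is a fair and worthwhile observation, consistent with the paper's implicit assumption in Theorem \ref{stability2}.
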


\begin{proof}
The assertion follows by stand arguments, therefore omitted here.
\end{proof}

Let $f\in L^2(\Omega)$ be fixed, we denote by
\begin{align}\label{3-6-16ct9}
\chi^h_f := \big\|u^h(f) -u(f)\big\|_{1,\Omega}.
\end{align}
Then (cf.\ \cite{Brenner_Scott,Ciarlet}),
\begin{align}\label{21-5-16ct1}
\lim_{h\to 0} \chi^h_f=0 \mbox{\quad and \quad} 0\le \chi^h_f\le Ch \mbox{\quad if \quad} u(f) \in H^2(\Omega).
\end{align}

We now show the convergence of finite element approximations to the identification. 

\begin{theorem}\label{stability2}
Assume that
$\limn h_n = 0$ and $(\delta_n)_n$ and $(\rho_n)_n$ be any positive sequences such that
\begin{align}\label{31-8-16ct1}
\rho_n \to 0, \quad \frac{\delta_n}{\sqrt{\rho_n}} \to 0, \quad \frac{ h_n|\log h_n|}{\rho_n} \to 0 \quad \mbox{and} \quad \frac{\chi^{h_n}_{\widehat{f}}}{\sqrt{\rho_n}} \to 0 \quad \mbox{as} \quad n\to\infty,
\end{align}
where $\widehat{f} \in \mathcal{I} \left(g^\dag\right)$ is arbitrary and $\chi^{h_n}_{\widehat{f}}$ is defined by \eqref{3-6-16ct9}. Furthermore, assume that
$\left( z_n\right)_n \subset L^2(\Gamma)$ is a sequence satisfying
$$\big\|z_n - g^\dag\big\|_{\Gamma} \le \delta_n$$
and $f_n $ is an arbitrary minimizer of the problem
$$\min_{f\in F^{h_n}_{ad}} \frac{1}{2}\|u^{h_n}(f)-z_n\|^2_{\Gamma} + \rho_n TV(f)$$
for each $n\in N$. Then, a subsequence of $(f_n)_n$ not relabeled and a total variation-minimizing solution $f^\dag$ of the identification problem $\left(\mathcal{IP}\right)$ exist such that
\begin{align}\label{9-12-16ct1}
\limn \|f_n -f^\dag\|_{L^1(\Omega)} =0 \quad \mbox{and} \quad \limn TV(f_n) =TV(f^\dag).
\end{align}
Furthermore, the discrete state sequence $\big( u^{h_n}(f_n)\big)_n$ converges in the $H^1(\Omega)$-norm to the unique weak solution $u(f^\dag)$ of the boundary value problem \eqref{17-5-16ct1}.
\end{theorem}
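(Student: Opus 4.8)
The plan is to run the classical comparison argument for convergence of regularized solutions, now balancing three error sources — the noise level $\delta_n$, the regularization parameter $\rho_n$ and the mesh size $h_n$ — through the coupling conditions \eqref{31-8-16ct1}. First I fix an arbitrary $\widehat{f} \in \mathcal{I}(g^\dag)$ and invoke Lemma \ref{HKTlemma} to produce a discrete competitor $\widehat{f}^{h_n} \in F^{h_n}_{ad}$ with $\|\widehat{f}^{h_n} - \widehat{f}\|_{L^1(\Omega)} \le C h_n|\log h_n|$ and $TV(\widehat{f}^{h_n}) \to TV(\widehat{f})$. Testing the optimality of $f_n$ against $\widehat{f}^{h_n}$ gives
\begin{align*}
\frac{1}{2}\|u^{h_n}(f_n) - z_n\|^2_\Gamma + \rho_n TV(f_n) \le \frac{1}{2}\|u^{h_n}(\widehat{f}^{h_n}) - z_n\|^2_\Gamma + \rho_n TV(\widehat{f}^{h_n}).
\end{align*}
The crux is to bound the residual on the right. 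I split $\|u^{h_n}(\widehat{f}^{h_n}) - z_n\|_\Gamma$ into a discrete source-stability term $\|u^{h_n}(\widehat{f}^{h_n}) - u^{h_n}(\widehat{f})\|_\Gamma$, a discretization term $\|u^{h_n}(\widehat{f}) - u(\widehat{f})\|_\Gamma$, and a data term $\|u(\widehat{f}) - z_n\|_\Gamma = \|g^\dag - z_n\|_\Gamma \le \delta_n$, where the last identity uses $\widehat{f} \in \mathcal{I}(g^\dag)$. By the trace inequality \eqref{17-10-16ct4*} the second term is at most $c_\gamma\chi^{h_n}_{\widehat{f}}$, while for the first I use the uniform discrete stability \eqref{18/5:ct1} and convert the $L^1$-estimate of Lemma \ref{HKTlemma} into an $L^2$-estimate via the $L^\infty$-bound on $F_{ad}$, exactly as in Lemma \ref{weakly conv.}, obtaining a bound of order $(h_n|\log h_n|)^{1/2}$. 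Writing $\varepsilon_n := C(h_n|\log h_n|)^{1/2} + c_\gamma\chi^{h_n}_{\widehat{f}} + \delta_n$, dividing the optimality inequality by $\rho_n$ and using $(a+b+c)^2 \le 3(a^2+b^2+c^2)$ shows $\varepsilon_n^2/\rho_n \to 0$ precisely by the four limits in \eqref{31-8-16ct1}, so that $\limsup_n TV(f_n) \le TV(\widehat{f})$.

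With $TV(f_n)$ bounded and $\underline{f} \le f_n \le \overline{f}$ controlling the $L^1$-norm, the sequence is bounded in $BV(\Omega)$; Lemma \ref{bv1}(i) extracts a subsequence converging in $L^1(\Omega)$ to some $f^\dag$, and the pointwise-a.e. argument of Lemma \ref{weakly conv.} places $f^\dag \in F_{ad}$. To show $f^\dag \in \mathcal{I}(g^\dag)$ I return to the optimality inequality: dropping $\rho_n TV(f_n) \ge 0$ and using $\rho_n TV(\widehat{f}^{h_n}) \to 0$ together with $\varepsilon_n \to 0$ yields $\|u^{h_n}(f_n) - z_n\|_\Gamma \to 0$. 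The hard part is to pass the discrete state $u^{h_n}(f_n)$ to the continuous limit $u(f^\dag)$ while both the source and the mesh vary. I would decompose $u^{h_n}(f_n) - u(f^\dag) = [u^{h_n}(f_n) - u^{h_n}(f^\dag)] + [u^{h_n}(f^\dag) - u(f^\dag)]$, bounding the first bracket by the discrete source-stability estimate $\le C\|f_n - f^\dag\|_\Omega \to 0$ (again via $L^1 \to L^2$) and the second by $\chi^{h_n}_{f^\dag} \to 0$ from \eqref{21-5-16ct1}; this is precisely the content of Lemma \ref{30-11-16ct6}. Then $u^{h_n}(f_n) \to u(f^\dag)$ in $H^1(\Omega)$, hence in $L^2(\Gamma)$ by \eqref{17-10-16ct4*}, which already establishes the final assertion of the theorem. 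Combining this with $z_n \to g^\dag$ gives $\|u(f^\dag) - g^\dag\|_\Gamma = \lim_n \|u^{h_n}(f_n) - z_n\|_\Gamma = 0$, so $f^\dag \in \mathcal{I}(g^\dag)$.

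Finally, lower semicontinuity (Lemma \ref{bv1}(ii)) gives $TV(f^\dag) \le \liminf_n TV(f_n)$, and chaining with the limsup bound from the first step yields $TV(f^\dag) \le \liminf_n TV(f_n) \le \limsup_n TV(f_n) \le TV(\widehat{f})$ for every $\widehat{f} \in \mathcal{I}(g^\dag)$; since $f^\dag \in \mathcal{I}(g^\dag)$ itself, $f^\dag$ solves $(\mathcal{IP})$ and is a total variation-minimizing solution. Taking the particular choice $\widehat{f} = f^\dag$ collapses the chain of inequalities to $\lim_n TV(f_n) = TV(f^\dag)$, which is the second half of \eqref{9-12-16ct1}. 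I expect the only genuinely delicate step to be the simultaneous limit in the identification argument — controlling the finite-element error $u^{h_n}(f_n) - u(f_n)$ as the source $f_n$ itself moves — which forces the decomposition through the fixed limit $f^\dag$ and relies essentially on the mesh-independent discrete stability \eqref{18/5:ct1}.
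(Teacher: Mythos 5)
Your proposal is correct and follows essentially the same route as the paper's proof: optimality against the interpolant $\widehat{f}^{h_n}$ from Lemma \ref{HKTlemma}, splitting the residual into noise, interpolation and discretization parts controlled by \eqref{31-8-16ct1}, BV-compactness via Lemma \ref{bv1}, identification of the limit through the vanishing discrete residual, and the $\liminf$/$\limsup$ chain with $\widehat{f}=f^\dag$ to get convergence of the total variations. The only (immaterial) difference is ordering: you establish the $H^1(\Omega)$-convergence of $u^{h_n}(f_n)$ to $u(f^\dag)$ first and use it for the identification step, whereas the paper first identifies $f^\dag\in\mathcal{I}(g^\dag)$ via $L^2(\Gamma)$-convergence (Lemma \ref{30-11-16ct6}) and proves the $H^1$-convergence at the end.
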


\begin{proof}
By the definition of $f_n$, we have
\begin{align}\label{6-5-16ct8}
\frac{1}{2}\big\|u^{h_n}(f_n)-z_n \big\|^2_{\Gamma} + \rho_n TV(f_n) &\le \frac{1}{2}\big\|u^{h_n}(\widehat{f}^{h_n})-z_n \big\|^2_{\Gamma} + \rho_n TV(\widehat{f}^{h_n}),
\end{align}
where $\widehat{f}^{h_n} \in F^{h_n}_{ad}$ generates from $\widehat{f}$, due to Lemma \ref{HKTlemma}. We bound
\begin{align}\label{24-10-16ct1}
\frac{1}{2}\big\|u^{h_n}(\widehat{f}^{h_n})-z_n \big\|^2_{\Gamma} 
&\le \big\|u^{h_n}(\widehat{f}^{h_n})- u(\widehat{f}) \big\|^2_{\Gamma} + \big\|u(\widehat{f})- z_n\big\|^2_{\Gamma}  \nonumber\\
&\le C \big\|u^{h_n}(\widehat{f}^{h_n})- u(\widehat{f}) \big\|^2_{1,\Omega} + \delta^2_n.
\end{align}
Using \eqref{17-10-16ct2} and \eqref{10/4:ct1}, we get for all $v^{h_n}\in \mathcal{V}^{h_n}_1$ that
\begin{align*}
a\left( u(\widehat{f})-u^{h_n}(\widehat{f}^{h_n}), v^{h_n}\right) = \left( \widehat{f} - \widehat{f}^{h_n}, v^{h_n}\right)_\Omega
\end{align*}
and so that
\begin{align*}
a\left( u^{h_n}(\widehat{f})-u^{h_n}(\widehat{f}^{h_n}), v^{h_n}\right) = \left( \widehat{f} - \widehat{f}^{h_n}, v^{h_n}\right)_\Omega + a\left( u^{h_n}(\widehat{f})-u(\widehat{f}), v^{h_n}\right).
\end{align*}
Taking $v^{h_n} =u^{h_n}(\widehat{f})-u^{h_n}(\widehat{f}^{h_n})$, by \eqref{18-10-16ct1}, \eqref{22-7-16ct5} and \eqref{3-6-16ct9}, we deduce
\begin{align*}
c_1\big\|u^{h_n}(\widehat{f})- u^{h_n}(\widehat{f}^{h_n}) \big\|_{1,\Omega} 
&\le \big\|\widehat{f} - \widehat{f}^{h_n}\big\|_\Omega + C\big\|u^{h_n}(\widehat{f})-u(\widehat{f})\big\|_{1,\Omega}\\
&\le C\left( h_n|\log h_n|\right)^{1/2} + C\chi^{h_n}_{\widehat{f}}
\end{align*}
and then
\begin{align*}
\big\|u^{h_n}(\widehat{f}^{h_n})- u(\widehat{f}) \big\|_{1,\Omega} \le \big\|u(\widehat{f}) - u^{h_n}(\widehat{f})\big\|_{1,\Omega} + \big\|u^{h_n}(\widehat{f})- u^{h_n}(\widehat{f}^{h_n}) \big\|_{1,\Omega} \le C\left( \left( h_n|\log h_n|\right)^{1/2} + \chi^{h_n}_{\widehat{f}}\right).
\end{align*}
Combining this with \eqref{24-10-16ct1}--\eqref{6-5-16ct8}, we get
\begin{align}\label{6-5-16ct8*}
\frac{1}{2}\big\|u^{h_n}(f_n)-z_n \big\|^2_{\Gamma} + \rho_n TV(f_n) \le C\left( \delta_n^2 + h_n|\log h_n| + \left( \chi^{h_n}_{\widehat{f}}\right)^2\right) + \rho_n TV(\widehat{f}^{h_n}).
\end{align}
It follows from the last inequality and \eqref{31-8-16ct1}, \eqref{22-7-16ct6} that
\begin{align}\label{6-5-16ct10}
\limn \big\|u^{h_n}(f_n)-z_n \big\|_{\Gamma}  =0
\end{align}
and
\begin{align}\label{6-5-16ct11}
\limsupn TV(f_n) \le  TV(\widehat{f}).
\end{align}
Due to Lemma \ref{bv1} and Lemma \ref{30-11-16ct6}, a subsequence of $\left( f_n\right)_n $ not relabeled and an element $f^\dag \in F_{ad}$ exist such that 
\begin{align}
& f_n \rightarrow f^\dag \quad \mbox{~ in the ~} L^1(\Omega)\mbox{-norm}, \nonumber\\
& TV(f^\dag) \le \liminfn TV(f_n), \label{6-5-16ct12}\\
& u^{h_n}(f_n) \rightarrow u(f^\dag) \quad\mbox{~ in the ~} L^2(\Gamma)\mbox{-norm}.\notag
\end{align}
Thus, it follows from \eqref{6-5-16ct10} that
\begin{align*}
\|u(f^\dag)-g^\dag\|_{\Gamma} \le \limn \left( \|u(f^\dag)-u^{h_n}(f_n)\|_\Gamma + \|u^{h_n}(f_n)-z_n\|_\Gamma + \|z_n-g^\dag\|_\Gamma\right)  =0
\end{align*}
and so that $f^\dag \in \mathcal{I} \left(g^\dag\right)$. Furthermore, by \eqref{6-5-16ct11} and \eqref{6-5-16ct12}, we also get
$$TV(f^\dag) \le \liminfn TV(f_n) \le \limsupn TV(f_n) \le  TV(\widehat{f})$$
for any $\widehat{f} \in \mathcal{I} \left(g^\dag\right)$. Therefore, $f^\dag$ is a total variation-minimizing solution of the identification problem. Now, by setting $\widehat{f} =f^\dag$,
it implies that
$$TV(f^\dag) = \limn  TV(f_n).$$
Finally, in view of \eqref{10/4:ct1} and \eqref{18-10-16ct1} we arrive at
\begin{align*}
c_1 \|u^{h_n}(f_n) - u^{h_n}(f^\dag)\|_{1,\Omega} \le \|f_n-f^\dag\|_\Omega \le \left( 2\overline{f} \|f_n-f^\dag\|_{L^1(\Omega)}\right)^{1/2} \to 0 \quad\mbox{as}\quad n\to \infty
\end{align*}
and therefore conclude that 
$$\limn \big\|u^{h_n}(f_n) - u(f^\dag)\big\|_{1,\Omega}
\le \limn \|u^{h_n}(f_n) - u^{h_n}(f^\dag)\|_{1,\Omega} + \limn \|u^{h_n}(f^\dag) - u(f^\dag)\|_{1,\Omega} = 0,$$
which finishes the proof.
\end{proof}

\section{An iterated total variation algorithm}\label{primal-dual}

The aim of this section is to propose an algorithm to reach minimizers of the problem $\big(\mathcal{P}^h\big)$. We start with the following note.  

\begin{remark} (i) Any $\phi \in \mathcal{V}^h_1$ can be considered as an
element of ${(\mathcal{V}^h_1)}^*$, the dual space of $\mathcal{V}^h_1$, by
\begin{align}\label{1-12-16ct7}
(\phi,\xi^h)_{\left( {(\mathcal{V}^h_1)}^*, \mathcal{V}^h_1\right) } := (\phi,\xi^h)_\Omega \quad \mbox{for all} \quad \xi^h \in \mathcal{V}^1_h.
\end{align}
(ii) The inclusion $(\mathcal{V}^h_0)^d \subset {(\mathcal{V}^h_1)}^*$ holds via the identity
\begin{align}\label{1-12-16ct7*}
(p^*,\xi^h)_{\left( {(\mathcal{V}^h_1)}^*, \mathcal{V}^h_1\right) } := (\nabla \xi^h,p^*)_\Omega \quad \mbox{for all} \quad p^*\in (\mathcal{V}^h_0)^d \quad \mbox{and} \quad \xi^h \in \mathcal{V}^1_h.
\end{align}
\end{remark}

\begin{lemma}\label{subgradient}
For each $f^h\in\mathcal{V}^h_1$ the relation
\begin{align}\label{1-12-16ct1}
\partial TV(f^h) = \left\{ p^h\in \mathcal{B}_1\left( (\mathcal{V}^h_0)^d\right) \subset (\mathcal{V}^h_0)^d ~\big|~  (\nabla f^h, p^h)_\Omega = \int_\Omega |\nabla f^h| \right\}
\end{align}
holds, where
$$\mathcal{B}_1\left( (\mathcal{V}^h_0)^d\right) := \left\{ p^h \in (\mathcal{V}^h_0)^d ~|~  \|p^h\|_\infty \le 1\right\}$$
and for $p^h := (p^h_j)_{j=1}^d\in (\mathcal{V}^h_0)^d$, $ \|p^h\|_\infty := \max_{j=1,...,d}\|p^h_j\|_{L^\infty(\Omega)}$.
\end{lemma}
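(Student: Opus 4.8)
The plan is to recognize the discrete total variation, restricted to the space $\mathcal{V}^h_1$, as the support function of a compact convex set in $(\mathcal{V}^h_1)^*$ and then to read off its subdifferential. The first, computational step is to establish the discrete dual representation
\[
TV(f^h) = \int_\Omega |\nabla f^h| = \sup_{p^h \in \mathcal{B}_1\left((\mathcal{V}^h_0)^d\right)} (\nabla f^h, p^h)_\Omega .
\]
Since $f^h\in\mathcal{V}^h_1$ is piecewise affine, $\nabla f^h$ is piecewise constant, so $\nabla f^h\in(\mathcal{V}^h_0)^d$, and by \eqref{6-12-16ct3} the pairing splits over the triangulation as $(\nabla f^h,p^h)_\Omega=\sum_{T\in\mathcal{T}^h}|T|\,(\nabla f^h)_{|T}\cdot(p^h)_{|T}$. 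On each $T$ the supremum over the cube $\{\,|(p^h)_{|T}|_\infty\le 1\,\}$ equals the $\ell_1$-norm of $(\nabla f^h)_{|T}$ and is attained at the componentwise sign of $(\nabla f^h)_{|T}$; summing over $T$ gives the identity. Through the identification of the preceding Remark this exhibits $TV$ on $\mathcal{V}^h_1$ as $TV(g^h)=\sup_{\lambda\in\mathcal{C}}\langle\lambda,g^h\rangle$, the support function of the compact convex set $\mathcal{C}\subset(\mathcal{V}^h_1)^*$ which is the image of $\mathcal{B}_1\left((\mathcal{V}^h_0)^d\right)$ under $p^h\mapsto(\nabla\,\cdot\,,p^h)_\Omega$.

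With this representation the two inclusions follow. For ``$\supseteq$'' I take $p^h\in\mathcal{B}_1\left((\mathcal{V}^h_0)^d\right)$ with $(\nabla f^h,p^h)_\Omega=\int_\Omega|\nabla f^h|$ and compute, for every $g^h\in\mathcal{V}^h_1$, that $TV(f^h)+(\nabla(g^h-f^h),p^h)_\Omega=(\nabla g^h,p^h)_\Omega\le TV(g^h)$, the last bound being exactly the dual representation; hence the subgradient inequality holds and $p^h\in\partial TV(f^h)$. For ``$\subseteq$'' I first exploit the positive homogeneity of $TV$: testing the subgradient inequality at $g^h=0$ and at $g^h=2f^h$ forces the complementarity $(\nabla f^h,p^h)_\Omega=TV(f^h)$, after which the inequality collapses to $\langle\lambda,g^h\rangle\le TV(g^h)$ for all $g^h$, where $\lambda\in\partial TV(f^h)\subset(\mathcal{V}^h_1)^*$ is the subgradient in question.

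The main obstacle is the remaining membership $p^h\in\mathcal{B}_1$, i.e.\ $\|p^h\|_\infty\le 1$. It is tempting to extract this from $(\nabla g^h,p^h)_\Omega\le\int_\Omega|\nabla g^h|$ by inserting fields localized on a single triangle, but this fails: $\nabla$ does not map $\mathcal{V}^h_1$ \emph{onto} $(\mathcal{V}^h_0)^d$ (discretely divergence-free fields populate the cokernel), so the gradients $\nabla g^h$ do not exhaust $(\mathcal{V}^h_0)^d$. I therefore argue through the support-function picture. The subgradient $\lambda$ satisfies $\langle\lambda,g^h\rangle\le TV(g^h)=\sup_{\mu\in\mathcal{C}}\langle\mu,g^h\rangle$ for every $g^h$; since $\mathcal{C}$ is closed and convex and, $\mathcal{V}^h_1$ being finite dimensional, $\mathcal{V}^h_1$ is the predual of $(\mathcal{V}^h_1)^*$, the Hahn--Banach separation theorem forces $\lambda\in\mathcal{C}$. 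Thus $\lambda$ admits a representative $p^h\in\mathcal{B}_1\left((\mathcal{V}^h_0)^d\right)$, which together with the complementarity already shown places $p^h$ in the right-hand set. I will finally remark that, under the identification of the preceding Remark, the equality of the two sets is understood modulo the kernel of $p^h\mapsto(\nabla\,\cdot\,,p^h)_\Omega$, that is, modulo discretely divergence-free fields, which is precisely the ambiguity inherent in representing an element of $(\mathcal{V}^h_1)^*$ by a field in $(\mathcal{V}^h_0)^d$.
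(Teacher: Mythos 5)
Your proof is correct, and it reaches the lemma by a route that is structurally parallel to the paper's but self-contained where the paper cites. Both arguments pivot on the same dual representation $TV(f^h)=\max_{p^h\in\mathcal{B}_1((\mathcal{V}^h_0)^d)}(\nabla f^h,p^h)_\Omega$; the paper quotes this identity from Bartels and then invokes \cite[Theorem 2.4.18]{zal02} (the subdifferential of a supremum of linear functionals is the closed convex hull of the active ones, applied after constructing the explicit componentwise-sign maximizer $p^*$), whereas you prove the identity by the per-triangle $\ell_1$--$\ell_\infty$ duality computation and then establish the two inclusions by hand: the inclusion $\supseteq$ by direct verification of the subgradient inequality, and $\subseteq$ by exploiting positive homogeneity (testing at $g^h=0$ and $g^h=2f^h$ to force the complementarity $\langle\lambda,f^h\rangle=TV(f^h)$) followed by Hahn--Banach separation from the compact convex set $\mathcal{C}=\{(\nabla\,\cdot\,,p^h)_\Omega : p^h\in\mathcal{B}_1\}\subset(\mathcal{V}^h_1)^*$. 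What the paper's appeal to the cited theorem buys is brevity; what your argument buys is transparency --- it needs only finite-dimensional separation, and it makes explicit two points the paper leaves implicit: first, that one cannot extract $\|p^h\|_\infty\le 1$ by localizing test gradients on single triangles, since $\nabla(\mathcal{V}^h_1)$ is a proper subspace of $(\mathcal{V}^h_0)^d$ (which is exactly why the separation step, or the cited theorem, is needed); and second, that the stated set equality is to be read through the non-injective identification \eqref{1-12-16ct7*}, i.e.\ modulo discretely divergence-free fields --- and your observation that the right-hand set of \eqref{1-12-16ct1} is saturated under this equivalence makes that reading consistent.
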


\begin{proof}
Let $f^h\in\mathcal{V}^h_1$ be fixed. As shown in \cite{bartels-12} that
\begin{align}\label{25-9-18ct1}
TV(f^h) = \int_\Omega |\nabla f^h| = \max_{p^h\in (\mathcal{V}^h_0)^d,~\|p^h\|_\infty \le 1} (\nabla f^h, p^h)_\Omega.
\end{align}
Assume that $\mathcal{T}^h := \{T_1, T_2, ..., T_{n^h}\}$ with $n^h\in\mathbb{N}$ and $\overline{\Omega} = \cup_{i=1}^{n^h} \overline{T_i}$. We then consider $p^* := (p^*_{ij})_{n^h\times d} \in \mathcal{B}_1\left( (\mathcal{V}^h_0)^d\right)$ with, for all $i=1,...,n^h$ and $j=1,...,d$,
\begin{equation*}
p^*_{ij} := 
\begin{cases}
+1 \quad &\mbox{if}\quad \frac{\partial {f^h}_{|T_i}}{x_j}>0,\\
-1 \quad &\mbox{if}\quad \frac{\partial {f^h}_{|T_i}}{x_j}<0,\\
\in [-1, +1] \quad &\mbox{if}\quad \frac{\partial {f^h}_{|T_i}}{x_j}=0.
\end{cases}
\end{equation*}
and deduce from \eqref{1-12-16ct7*}, \eqref{25-9-18ct1} that
$$TV(f^h) = (p^*,f^h)_{\left( {(\mathcal{V}^h_1)}^*, \mathcal{V}^h_1\right) }.$$
The subdifferential $\partial TV(f^h)$ is then given by (cf.\ \cite[Theorem 2.4.18]{zal02})
\begin{align*}
\partial TV(f^h) = \mathbf{Co} \left(  \bigcup \left\{p^* ~\big|~ (p^*,f^h)_{\left( {(\mathcal{V}^h_1)}^*, \mathcal{V}^h_1\right) } = TV(f^h) \quad \mbox{and} \quad \|p^*\|_\infty \le 1 \right\} \right),
\end{align*}
and so that \eqref{1-12-16ct1} follows, which finishes the proof.
\end{proof}

We mention that, similar to the decomposition \eqref{30-11-16ct2}, for all $f\in L^2(\Omega)$ we have
\begin{align}\label{30-11-16ct2*}
u^h(f) = \overline{u^h}(f) +\widehat{u^h},
\end{align}
where
\begin{align}\label{30-11-16ct3*}
a\left( \overline{u^h}(f),v^h\right)  = l^f_0(v^h) \quad \mbox{and} \quad a\left( \widehat{u^h},v^h\right)  = l^0_j(v^h)
\end{align}
for all $v^h\in \mathcal{V}^h_1$. Furthermore, the identity 
\begin{align}\label{30-11-16ct4*}
{u^h}'(f)\xi = \overline{u^h}(\xi)
\end{align}
holds for all $f,\xi \in L^2(\Omega)$.

\begin{lemma}\label{first-order-condition}
The functional $f^h \in F^h_{ad}$ is a solution of $(\mathcal{P}^h)$ if and only if there exists $p^h\in \partial TV(f^h)$ such that
\begin{align}
\left( g^h-f^h, u^h_a(f^h)\right)_\Omega + \rho \left( \nabla (g^h-f^h), p^h\right)_\Omega &\ge 0 \quad \mbox{for all} \quad g^h\in F^h_{ad}, \label{1-12-16ct2*}\\
\left( \nabla f^h, q^h - p^h\right)_\Omega &\le 0 \quad \mbox{for all} \quad q^h\in (\mathcal{V}^h_0)^d, \quad \|q^h\|_\infty \le 1
\label{1-12-16ct2**}
\end{align}
where $u^h_a$ is the discrete adjoint state defined by \eqref{17-10-16ct2**}.
\end{lemma}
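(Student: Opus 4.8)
The plan is to exploit the splitting $J^h = L^h + \rho\,TV$ over the convex set $F^h_{ad}$, where $L^h(f) := \tfrac12\|u^h(f)-z\|^2_\Gamma$ is convex and Fr\'echet differentiable (since $f\mapsto u^h(f)$ is affine by \eqref{30-11-16ct2*}) while $\rho\,TV$ is convex but nonsmooth. First I would identify the Riesz representative of the smooth part. Mimicking the continuous computation in \S\ref{3-6-19ct1}, for $\xi\in L^2(\Omega)$ one has $(L^h)'(f)\xi = \big(u^h(f)-z,\overline{u^h}(\xi)\big)_\Gamma$ by \eqref{30-11-16ct4*}; inserting $v^h=\overline{u^h}(\xi)$ into the discrete adjoint equation \eqref{17-10-16ct2**}, using the symmetry of $a(\cdot,\cdot)$, and then inserting $v^h=u^h_a(f)$ into the discrete state equation \eqref{30-11-16ct3*} yields
\begin{align*}
(L^h)'(f)\xi = a\big(u^h_a(f),\overline{u^h}(\xi)\big) = a\big(\overline{u^h}(\xi),u^h_a(f)\big) = \big(u^h_a(f),\xi\big)_\Omega,
\end{align*}
so that the gradient of $L^h$ at $f$ is precisely the discrete adjoint state $u^h_a(f)$. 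Convexity of $L^h$ is immediate from the affinity of $f\mapsto u^h(f)$ and the convexity of $\|\cdot\|^2_\Gamma$.

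For the sufficiency direction I would argue directly. If $p^h\in\partial TV(f^h)$ satisfies \eqref{1-12-16ct2*}, then for every $g^h\in F^h_{ad}$ the gradient inequality for the convex differentiable $L^h$ gives $L^h(g^h)\ge L^h(f^h)+\big(u^h_a(f^h),g^h-f^h\big)_\Omega$, and the subgradient inequality for $TV$ gives $TV(g^h)\ge TV(f^h)+\big(\nabla(g^h-f^h),p^h\big)_\Omega$. Adding $\rho$ times the second to the first and invoking \eqref{1-12-16ct2*} yields $J^h(g^h)\ge J^h(f^h)$, so $f^h$ is a minimizer. I would note that, by Lemma \ref{subgradient}, the membership $p^h\in\partial TV(f^h)$ is equivalent to $p^h\in\mathcal{B}_1\big((\mathcal{V}^h_0)^d\big)$ together with \eqref{1-12-16ct2**}, so \eqref{1-12-16ct2**} comes for free.

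The necessity direction is where the main obstacle lies. Assuming $f^h$ minimizes $J^h$ over $F^h_{ad}$, I would test along the segments $f^h+t(g^h-f^h)\in F^h_{ad}$, $t\in(0,1]$, divide by $t$ and let $t\to0^+$ to obtain the directional optimality condition
\begin{align*}
\big(u^h_a(f^h),g^h-f^h\big)_\Omega + \rho\,TV'(f^h;g^h-f^h) \ge 0 \qquad \text{for all } g^h\in F^h_{ad},
\end{align*}
where $TV'(f^h;\cdot)$ is the directional derivative, which is finite because $TV$ is continuous on the finite-dimensional space $\mathcal{V}^h_1$. The hard part is that this only furnishes, for each $g^h$, a maximizing subgradient that may depend on $g^h$; producing one single $p^h$ valid for all $g^h$ simultaneously requires an exchange of order. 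To this end I would write $TV'(f^h;v)=\max_{p^h\in\partial TV(f^h)}\big(\nabla v,p^h\big)_\Omega$ as the support function of $\partial TV(f^h)$, so the condition reads $\min_{g^h\in F^h_{ad}}\max_{p^h\in\partial TV(f^h)}\Psi(g^h,p^h)\ge 0$ with $\Psi(g^h,p^h):=\big(u^h_a(f^h),g^h-f^h\big)_\Omega+\rho\big(\nabla(g^h-f^h),p^h\big)_\Omega$ affine in each of its two arguments. Since $F^h_{ad}$ and $\partial TV(f^h)$ are compact convex subsets of finite-dimensional spaces (the latter by the explicit description in Lemma \ref{subgradient}, being the box $\mathcal{B}_1\big((\mathcal{V}^h_0)^d\big)$ intersected with a linear equality), a minimax theorem of von Neumann/Sion type legitimizes swapping $\min$ and $\max$, yielding $p^h\in\partial TV(f^h)$ with $\min_{g^h\in F^h_{ad}}\Psi(g^h,p^h)\ge 0$, which is exactly \eqref{1-12-16ct2*}; the membership $p^h\in\partial TV(f^h)$ then delivers \eqref{1-12-16ct2**} through Lemma \ref{subgradient}. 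The crux is therefore the compactness and convexity of $\partial TV(f^h)$ together with the applicability of the minimax theorem that legitimizes the interchange of the two extrema.
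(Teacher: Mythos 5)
Your proposal is correct, and it shares with the paper the two ingredients forced by the problem structure: the adjoint-state computation identifying the derivative of the misfit term $f\mapsto\tfrac{1}{2}\|u^h(f)-z\|^2_\Gamma$ at $f^h$ with $\xi\mapsto\left(u^h_a(f^h),\xi\right)_\Omega$, and the use of Lemma \ref{subgradient} to convert the membership $p^h\in\partial TV(f^h)$ into \eqref{1-12-16ct2**}. Where you genuinely diverge is the necessity direction. The paper stays entirely inside convex subdifferential calculus: optimality is first rewritten as the variational inequality \eqref{1-12-16ct5}, the adjoint computation recasts this as the statement that $f^h$ minimizes $\Psi(\xi^h) := \left(\xi^h,u^h_a(f^h)\right)_\Omega + \rho\, TV(\xi^h) + I_{F^h_{ad}}(\xi^h)$ over $\mathcal{V}^h_1$, i.e.\ $0\in\partial\Psi(f^h)$, and then $\partial\Psi(f^h)$ is split by the Moreau--Rockafellar sum rule (legitimate because the linear term and $TV$ are finite and continuous on the finite-dimensional space $\mathcal{V}^h_1$), which directly produces a single $p^h\in\partial TV(f^h)$ with $-u^h_a(f^h)-\rho p^h\in\partial I_{F^h_{ad}}(f^h)$, i.e.\ \eqref{1-12-16ct2*}. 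You instead pass to directional derivatives, represent $TV'(f^h;\cdot)$ as the support function of $\partial TV(f^h)$, and invoke a Sion-type minimax theorem on the compact convex sets $F^h_{ad}$ and $\partial TV(f^h)$ to exchange the two extrema; this is, in effect, a hand-made proof in this special case of the same existence-of-a-single-multiplier statement that the sum rule delivers. Your route is more self-contained (nothing beyond the definition of the subdifferential, Lemma \ref{subgradient}, and the minimax theorem is needed), but it obliges you to verify compactness of both sets and the minimax hypotheses, all of which do hold here since $F^h_{ad}$ is a closed bounded convex subset of $\mathcal{V}^h_1$ and $\partial TV(f^h)$ is the box $\mathcal{B}_1\left((\mathcal{V}^h_0)^d\right)$ cut by one affine equality. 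The paper's route is shorter once the sum rule is granted and handles the constraint through the indicator functional rather than through compactness of the feasible set. Your sufficiency argument (gradient inequality for the smooth part plus subgradient inequality for $TV$) is a direct version of what the paper obtains implicitly from its chain of equivalences; both are sound.
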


\begin{proof}
The functionals $\|u^h(\cdot)-z\|_\Gamma$ and $TV(\cdot)$ are both convex on the convex set $F^h_{ad}$, an element $f^h \in F^h_{ad}$ is thus a solution of $(\mathcal{P}^h)$ if and only if
\begin{align}\label{1-12-16ct5}
\left( u^h(f^h) -z, {u^h}'(f^h)(g^h-f^h)\right)_\Gamma +\rho\left( TV(g^h) - TV(f^h)\right) \ge 0
\end{align}
for all $g^h\in F^h_{ad}$. Using \eqref{17-10-16ct2**} and \eqref{30-11-16ct4*}--\eqref{30-11-16ct3*}, we get
\begin{align*}
\left( u^h(f^h) -z, {u^h}'(f^h)(g^h-f^h)\right)_\Gamma 
&= a\left( u^h_a(f^h), {u^h}'(f^h)(g^h-f^h)\right)\\
&= a\left( u^h_a(f^h), \overline{u^h}(g^h-f^h)\right)\\
&=\left( g^h-f^h, u^h_a(f^h)\right)_\Omega
\end{align*}
and thus deduce from \eqref{1-12-16ct5} that
\begin{align}\label{1-12-16ct6}
\left( g^h, u^h_a(f^h)\right)_\Omega +\rho TV(g^h) \ge \left( f^h, u^h_a(f^h)\right)_\Omega +\rho TV(f^h), \quad \forall g^h\in F^h_{ad}.
\end{align}
We here consider the convex functional
$$\Psi : \mathcal{V}^h_1 \to \mathbb{R}\cup\{\infty\} \quad \mbox{with} \quad \xi^h \mapsto \Psi(\xi^h) := \left( \xi^h, u^h_a(f^h)\right)_\Omega +\rho TV(\xi^h) + I_{F^h_{ad}} (\xi^h),$$
where $I_{F^h_{ad}}$ is the indicator functional of the set $F^h_{ad}$. Then, the inequality \eqref{1-12-16ct6} is equivalent to the relation
\begin{align}\label{1-12-16ct6*}
0 \in \partial \Psi(f^h).
\end{align}
Since, for all $\xi^h \in \mathcal{V}^h_1$
$$\partial I_{F^h_{ad}}(\xi^h) = \left\{ p^*\in {(\mathcal{V}^h_1) }^* ~\big|~ (p^*, g^h-\xi^h)_{\left( {(\mathcal{V}^h_1)}^*, \mathcal{V}^h_1\right) } \le 0 \quad \mbox{for all} \quad g^h\in F^h_{ad}\right\},$$
the relation \eqref{1-12-16ct6*} means that there exists $p^h\in \partial TV(f^h)$ such that
$$\left( - u^h_a(f^h) - \rho p^h, g^h-f^h\right)_{\left( {(\mathcal{V}^h_1)}^*, \mathcal{V}^h_1\right) } \le 0 $$
for all $g^h\in F^h_{ad}$. This together with \eqref{1-12-16ct7}--\eqref{1-12-16ct7*} yields \eqref{1-12-16ct2*}. Now, for all $q^h\in (\mathcal{V}^h_0)^d$ with $\|q^h\|_\infty \le 1$ we have from the fact $p^h\in \partial TV(f^h)$ defined by the equation \eqref{1-12-16ct1} that
\begin{align*}
\left( \nabla f^h, q^h\right)_\Omega = \int_\Omega \nabla f^h\cdot q^h \le \int_\Omega |\nabla f^h|\cdot|q^h| \le \int_\Omega |\nabla f^h| = \left( \nabla f^h, p^h\right)_\Omega,
\end{align*}
which finishes the proof.
\end{proof}

\begin{remark}\label{16-4-18ct3}
The system \eqref{1-12-16ct2*}--\eqref{1-12-16ct2**} is equivalent to the following inequality
\begin{align}\label{1-12-16ct2}
\left( g^h-f^h, u^h_a(f^h)\right)_\Omega + \rho \left( \nabla (g^h-f^h), p^h\right)_\Omega - \left( \nabla f^h, q^h - p^h\right)_\Omega &\ge 0
\end{align}
for all $(g^h, q^h) \in F^h_{ad} \times (\mathcal{V}^h_0)^d$ with $\|q^h\|_\infty \le 1$.
\end{remark}
In the sequel, we make use the following notation (cf.\ \cite{bartels-12, bartels-16})
\begin{align}\label{16-4-18ct1}
\|\nabla\| := \sup_{0\neq v^h \in \mathcal{V}^h_1} \dfrac{\|\nabla v^h\|_\Omega}{\|v^h\|_\Omega}.
\end{align}
By the inverse inequality (cf.\ \cite{Brenner_Scott,Ciarlet}) $\|v^h\|_{1,\Omega} \le Ch^{-1}\|v^h\|_\Omega$ for all $v^h \in \mathcal{V}^h_1$, we obtain that $\|\nabla\| \le Ch^{-1}$.

\begin{algorithm}[H] \label{31-8-18ct1}
    \SetKwInOut{Input}{Input}
    \SetKwInOut{Output}{Output}
     \vspace{0.3cm}
    \Input{Let a desired number of iterations $N$ and a regularization parameter $\rho>0$ be given. Choose an initial iterate $\mu^h_0 := (f^h_0, p^h_0) \in F^h_{ad} \times (\mathcal{V}^h_0)^d$ and parameters $\tau, \theta>0$ such that
    \begin{align}\label{5-12-16ct0}
     \left( \frac{1}{\tau} - \frac{c^2_\gamma}{c^2_1}\right)\frac{\theta}     {\tau} > \rho^2\|\nabla\|^2,
    \end{align}
where $c_1$ and $c_\gamma$ are respectively defined by \eqref{18-10-16ct1} and \eqref{17-10-16ct4*}.    
    Set $n=0$.}
    \Output{An approximation of a solution of $(\mathcal{P}^h)$}
    \vspace{0.3cm}
    \If{$n\le N$}
      {\eqref{13-9-18ct1}, \eqref{13-9-19ct2} and \eqref{13-9-18ct3}        
      }
    \caption{Minimizing of $(\mathcal{P}^h)$}
\end{algorithm}

\begin{remark} \label{31-5-19ct1}
Due to the optimality of $f^h_{n+1}$ in \eqref{13-9-18ct1}, we have that
\begin{align*}
\left( u^h_a(f^h_n), g^h-f^h_{n+1}\right)_\Omega +\rho \left( \nabla(g^h -f^h_{n+1}), p^h_n\right)_\Omega +\frac{1}{\tau}\left( f^h_{n+1} -f^h_n, g^h - f^h_{n+1}\right)_\Omega \ge 0
\end{align*}
for all $g^h \in F^h_{ad}$ which can be rewritten as
\begin{align*}
\left( f^h_n +\tau (\rho \div~p^h_n - u^h_a(f^h_n)) - f^h_{n+1}, g^h- f^h_{n+1}\right)_\Omega \le 0 \quad \mbox{for all} \quad g^h \in F^h_{ad}
\end{align*}
or
\begin{align}\label{3-9-18ct2}
f^h_{n+1} = \mathcal{P}^{L^2}_{F^h_{ad}} \left( f^h_n - \tau (u^h_a(f^h_n) - \rho \div~p^h_n)\right),
\end{align}
where $\mathcal{P}^{L^2}_{F^h_{ad}} : L^2(\Omega) \to F^h_{ad}$ denotes the $L^2$-projection on the set $F^h_{ad}$ characterized by
\begin{align*}
\left( \phi - \mathcal{P}^{L^2}_{F^h_{ad}} \phi, g^h - \mathcal{P}^{L^2}_{F^h_{ad}} \phi\right)_\Omega \le 0  \quad \mbox{for all} \quad g^h \in F^h_{ad}
\end{align*}
for each $\phi\in L^2(\Omega)$.
\end{remark}

\begin{remark}
For all $f\in L^2(\Omega)$ let $\overline{u^h_a}(f)\in \mathcal{V}^h_1$ be the unique weak solution of the following variational equation 
\begin{align} \label{6-12-16ct1}
a\left( \overline{u^h_a}(f),v^h\right)  = \left( \overline{u^h}(f), v^h\right)_\Gamma 
\end{align}
for all $v^h\in \mathcal{V}^h_1$, where $\overline{u^h}(f)$ is defined by \eqref{30-11-16ct3*}. Then we have for all $v^h\in \mathcal{V}^h_1$ and $f,g\in L^2(\Omega)$ that
\begin{align*}
a\left( u^h_a(f) -u^h_a(g), v^h\right) 
&= a\left( u^h_a(f), v^h\right) - a\left(u^h_a(g), v^h\right) \\
&= \left( u^h(f)-u^h(g), v^h\right) _{\Gamma}, \quad \mbox{by} ~ \eqref{17-10-16ct2**}\\
&= \left( \overline{u^h}(f)-\overline{u^h}(g), v^h\right) _{\Gamma}, \quad \mbox{by} ~ \eqref{30-11-16ct2*}\\
&= \left( \overline{u^h}(f-g), v^h\right) _{\Gamma}\\
&=a\left( \overline{u^h_a}(f-g), v^h\right), \quad \mbox{by} ~ \eqref{6-12-16ct1}.
\end{align*}
Thus, by \eqref{18-10-16ct1}, taking $v^h = u^h_a(f) -u^h_a(g) - \overline{u^h_a}(f-g)\in \mathcal{V}^h_1$, we obtain
\begin{align}\label{6-12-16ct2}
 u^h_a(f) -u^h_a(g) = \overline{u^h_a}(f-g)
\end{align}
for all $f,g\in L^2(\Omega)$. Furthermore, by \eqref{18-10-16ct1}, \eqref{17-10-16ct4*} and \eqref{30-11-16ct3*}, it holds the estimate
\begin{align}\label{6-12-16ct2*}
\big\|\overline{u^h_a}(f)\big\|_{\Omega} 
\le \big\|\overline{u^h_a}(f)\big\|_{1,\Omega} 
\le \frac{c_\gamma}{c_1}\big\|\overline{u^h}(f)\big\|_{\Gamma} 
\le \frac{c^2_\gamma}{c_1}\big\|\overline{u^h}(f)\big\|_{1,\Omega} 
\le \frac{c^2_\gamma}{c_1^2}\|f\|_\Omega.
\end{align}
\end{remark}

\begin{remark}
Let 
\begin{align*}
A:= 
\begin{pmatrix}
-\rho\div + u^h_a\\
-\rho\nabla
\end{pmatrix},
\quad \mu^h := \left(
f^h, p^h \right)
\quad \mbox{and} \quad
\nu^h:= \left(
g^h, q^h \right).
\end{align*}
Then the system \eqref{1-12-16ct2*}--\eqref{1-12-16ct2**} can be rewritten in the abbreviation form
\begin{align}\label{6-12-16ct4*}
\left( A(\mu^h), \nu^h - \mu^h\right) \ge 0. 
\end{align}
Furthermore, the optimality of $f^h_{n+1}$ and $p^h_{n+1}$ due to Algorithm \ref{31-8-18ct1} yields that
\begin{equation}\label{5-12-16ct2}
\begin{aligned}
&\left( \frac{1}{\tau} ( f^h_{n+1} -f^h_n) + u^h_a(f^h_n), g^h - f^h_{n+1}\right)_\Omega +\rho \left( p^h_n, \nabla(g^h -f^h_{n+1})\right)_\Omega \ge 0,\\
&\left( \frac{\theta}{\tau} (p^h_{n+1}-p^h_n) -\rho \nabla \widetilde{f}^h_{n+1}, q^h-p^h_{n+1}\right) \ge 0
\end{aligned}
\end{equation}
for all $\nu^h = (g^h,q^h)\in F^h_{ad}\times \mathcal{B}_1\left( (\mathcal{V}^h_0)^d\right)$. 
\end{remark}

We have the following auxiliary result.
\begin{lemma}
Let $\mu^h_{n}:= \left(
f^h_{n}, p^h_{n} \right)$ and  $\mu^h_{n+1}:= \left(
f^h_{n+1}, p^h_{n+1} \right)$. Then the system \eqref{5-12-16ct2} has the form
\begin{align}\label{6-12-16ct4}
\left( A(\mu^h_{n+1}) + B(\mu^h_{n+1}-\mu^h_n), \nu^h - \mu^h_{n+1}\right) \ge 0, 
\end{align}
where the matrix
\begin{align}\label{16-4-18ct2}
B:=
\begin{pmatrix}
\frac{1}{\tau}I - \overline{u^h_a} & \rho\div\\
-\rho\nabla & \frac{\theta}{\tau}I
\end{pmatrix}
\end{align}
is symmetric, positive definite.
\end{lemma}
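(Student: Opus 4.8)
The plan is to verify that \eqref{5-12-16ct2} is equivalent to \eqref{6-12-16ct4} by a direct componentwise expansion, and then to establish the symmetry and positive definiteness of $B$ as two separate algebraic facts. For the form \eqref{6-12-16ct4} I would expand $A(\mu^h_{n+1})+B(\mu^h_{n+1}-\mu^h_n)$ using the definition of $A$ and \eqref{16-4-18ct2}. In the first (scalar) component the two terms $\rho\,\div p^h_{n+1}$ cancel, leaving
$$
u^h_a(f^h_{n+1})-\overline{u^h_a}(f^h_{n+1}-f^h_n)+\tfrac1\tau(f^h_{n+1}-f^h_n)-\rho\,\div p^h_n .
$$
The crucial reduction is the linearisation identity \eqref{6-12-16ct2}, which gives $\overline{u^h_a}(f^h_{n+1}-f^h_n)=u^h_a(f^h_{n+1})-u^h_a(f^h_n)$, so this component collapses to $u^h_a(f^h_n)+\tfrac1\tau(f^h_{n+1}-f^h_n)-\rho\,\div p^h_n$. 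Pairing with $g^h-f^h_{n+1}$ and rewriting the $\div$ term via the adjoint relation \eqref{6-12-16ct3} reproduces exactly the left-hand side of the first inequality in \eqref{5-12-16ct2}. In the second (vector) component the definition \eqref{13-9-19ct2} of $\widetilde f^h_{n+1}$ turns $-\rho\nabla f^h_{n+1}-\rho\nabla(f^h_{n+1}-f^h_n)$ into $-\rho\nabla\widetilde f^h_{n+1}$, and pairing with $q^h-p^h_{n+1}$ returns the second inequality. Adding the two inequalities yields \eqref{6-12-16ct4}, while testing \eqref{6-12-16ct4} with $q^h=p^h_{n+1}$ and with $g^h=f^h_{n+1}$ recovers them; hence the systems are equivalent.

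For the symmetry of $B$, the diagonal blocks $\tfrac1\tau I$ and $\tfrac\theta\tau I$ are trivially self-adjoint, and the off-diagonal blocks $\rho\,\div$ and $-\rho\nabla$ are mutually adjoint by \eqref{6-12-16ct3}. Thus symmetry reduces to self-adjointness of the block $\overline{u^h_a}$ on $\mathcal{V}^h_1$ with respect to $(\cdot,\cdot)_\Omega$. I would prove this by a double-testing argument: testing the defining equation \eqref{30-11-16ct3*} of $\overline{u^h}(g)$ with $v^h=\overline{u^h_a}(f)$ and using the symmetry of $a$ gives $(\overline{u^h_a}(f),g)_\Omega=a(\overline{u^h_a}(f),\overline{u^h}(g))$, and testing \eqref{6-12-16ct1} with $v^h=\overline{u^h}(g)$ then yields $(\overline{u^h_a}(f),g)_\Omega=(\overline{u^h}(f),\overline{u^h}(g))_\Gamma$, which is manifestly symmetric in $f$ and $g$. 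I expect this identity to be the main obstacle, since it is the only step requiring a genuine idea rather than bookkeeping.

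Finally, for positive definiteness I would evaluate the quadratic form on $\mu^h=(f^h,p^h)$, obtaining
$$
(B\mu^h,\mu^h)=\tfrac1\tau\|f^h\|^2_\Omega-(\overline{u^h_a}(f^h),f^h)_\Omega-2\rho(\nabla f^h,p^h)_\Omega+\tfrac\theta\tau\|p^h\|^2_\Omega .
$$
Using the stability bound \eqref{6-12-16ct2*} to estimate $(\overline{u^h_a}(f^h),f^h)_\Omega\le\tfrac{c_\gamma^2}{c_1^2}\|f^h\|^2_\Omega$ and the definition \eqref{16-4-18ct1} to bound $|(\nabla f^h,p^h)_\Omega|\le\|\nabla\|\,\|f^h\|_\Omega\|p^h\|_\Omega$, I would reduce $(B\mu^h,\mu^h)$ to a quadratic form in the pair $(\|f^h\|_\Omega,\|p^h\|_\Omega)$ whose $2\times2$ coefficient matrix has diagonal entries $\tfrac1\tau-\tfrac{c_\gamma^2}{c_1^2}$ and $\tfrac\theta\tau$ and off-diagonal $-\rho\|\nabla\|$. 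Positive definiteness of this matrix is precisely the parameter condition \eqref{5-12-16ct0}, which in addition forces the leading coefficient $\tfrac1\tau-\tfrac{c_\gamma^2}{c_1^2}$ to be positive; hence $(B\mu^h,\mu^h)>0$ for every $\mu^h\neq0$, completing the proof.
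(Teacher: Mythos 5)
Your proposal is correct and follows essentially the same route as the paper: the same rewriting of \eqref{5-12-16ct2} via the linearisation identity \eqref{6-12-16ct2} and the adjoint relation \eqref{6-12-16ct3}, and the same positive-definiteness argument reducing $(B\mu^h,\mu^h)$ to a $2\times 2$ quadratic form in $\bigl(\|f^h\|_\Omega,\|p^h\|_\Omega\bigr)$ controlled by the parameter condition \eqref{5-12-16ct0}. The one place you go beyond the paper is the explicit double-testing proof that $\overline{u^h_a}$ is self-adjoint, namely $(\overline{u^h_a}(f),g)_\Omega=(\overline{u^h}(f),\overline{u^h}(g))_\Gamma$; the paper asserts symmetry of $B$ citing only \eqref{6-12-16ct3}, which covers the off-diagonal blocks, so your argument fills in a step the paper leaves implicit.
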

\begin{proof}
Using the identity \eqref{6-12-16ct2}, we rewrite the system \eqref{5-12-16ct2} in the form
\begin{equation*}
\begin{aligned}
&\left( \frac{1}{\tau} ( f^h_{n+1} -f^h_n) - \overline{u^h_a}( f^h_{n+1} -f^h_n) +\rho\div( p^h_{n+1} -p^h_n) + u^h_a(f^h_{n+1}) -\rho\div p^h_{n+1}, g^h - f^h_{n+1}\right)_\Omega \ge 0,\\
&\left( -\rho \nabla (f^h_{n+1} - f^h_n) + \frac{\theta}{\tau} (p^h_{n+1}-p^h_n) -\rho \nabla f^h_{n+1}, q^h-p^h_{n+1}\right) \ge 0,
\end{aligned}
\end{equation*}
which verifies the inequality \eqref{6-12-16ct4}, where the matrix $B$ by the equation \eqref{6-12-16ct3} is symmetric. Now we show that $B$ is positive definite. For all $\mu^h = (f^h,p^h)\in F^h_{ad}\times \mathcal{B}_1\left( (\mathcal{V}^h_0)^d\right)$, by the inequality \eqref{6-12-16ct2*}, we get
\begin{align*}
\left( \frac{1}{\tau}f^h - \overline{u^h_a}(f^h),f^h\right)_\Omega \ge \frac{1}{\tau}\|f^h\|^2_\Omega - \|\overline{u^h_a}(f^h)\|_\Omega \|f^h\|_\Omega \ge \left( \frac{1}{\tau} - \frac{c^2_\gamma}{c^2_1}\right)\|f^h\|^2_\Omega.
\end{align*}
This together with \eqref{16-4-18ct1} yields that
\begin{align*}
\left(B\mu^h,\mu^h\right)
&= \left( \frac{1}{\tau}f^h - \overline{u^h_a}(f^h),f^h\right)_\Omega - 2\rho \left( \nabla f^h, p^h\right)_\Omega + \frac{\theta}{\tau} \left(p^h,p^h \right)_\Omega \\
&\ge \left( \frac{1}{\tau} - \frac{c^2_\gamma}{c^2_1}\right)\|f^h\|^2_\Omega - 2\rho\|\nabla\|\|f^h\|_\Omega \|p^h\|_\Omega + \frac{\theta}{\tau}\|p^h\|^2_\Omega \\
&= P\left( \|f^h\|_\Omega + \frac{Q}{P} \|p^h\|_\Omega\right)^2 + \frac{PR-Q^2}{P}\|p^h\|^2_\Omega 
\end{align*}
with
\begin{align*}
P:= \frac{1}{\tau} - \frac{c^2_\gamma}{c^2_1}, \quad Q:= - \rho\|\nabla\| \quad\mbox{and}\quad R:= \frac{\theta}{\tau}
\end{align*}
satisfying the relation $PR>Q^2$ due to \eqref{5-12-16ct0}, which finishes the proof.
\end{proof}

\begin{theorem}\label{PDA}
For any fixed $h>0$ let $\left( \mu^h_n\right)_n := \left( f^h_n, p^h_n\right)_n$ be the sequence generated by Algorithm \ref{31-8-18ct1}. 

(i) Then a subsequence $\left( \mu^h_{n_k}\right)_k$ and an element $\mu^h_* := (f^h_*,p^h_*) \in F^h_{ad} \times \partial TV(f^h_*)$ exist such that $\left( \mu^h_{n_k}\right)_k$ converges to $\mu^h_*$ in the finite dimensional space $\mathcal{V}^h_1 \times (\mathcal{V}^h_0)^d$. Furthermore, $\mu^h_*$ satisfies the condition \eqref{6-12-16ct4*}, i.e. $
\left( A(\mu^h_*), \nu^h - \mu^h_*\right) \ge 0 
$
for all $\nu^h\in F^h_{ad}\times \mathcal{B}_1\left( (\mathcal{V}^h_0)^d\right)$ and $f^h_*$ is thus a minimizer of $(\mathcal{P}^h)$.

(ii) The equation
\begin{align}\label{7-12-16ct1}
\|\mu^h_{n+1}-\mu^h_n\|^2 =\mathcal{O}\left( \frac{1}{n}\right) 
\end{align}
holds true, where $\|\cdot\|$ is some norm of the finite dimensional space $\mathcal{V}^h_1 \times (\mathcal{V}^h_0)^d$.
\end{theorem}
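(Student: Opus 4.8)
The plan is to exploit the symmetric positive-definite matrix $B$ from \eqref{16-4-18ct2} to construct a Lyapunov-type energy functional $\|\mu^h_{n}-\mu^h_*\|_B^2 := (B(\mu^h_n - \mu^h_*), \mu^h_n - \mu^h_*)$ that is monotonically decreasing along the iteration, and then derive both convergence and the rate from this. First I would establish the monotonicity estimate. Testing the variational inequality \eqref{6-12-16ct4} at step $n+1$ with $\nu^h = \mu^h_*$, and testing the optimality condition \eqref{6-12-16ct4*} for $\mu^h_*$ with $\nu^h = \mu^h_{n+1}$, I would add the two inequalities. The term $(A(\mu^h_{n+1}) - A(\mu^h_*), \mu^h_{n+1} - \mu^h_*)$ that arises is nonnegative by the monotonicity of the operator $A$ (this follows from $L''(f)(\xi,\xi)\ge 0$ in \eqref{1-12-16ct4}, i.e.\ the monotonicity of the affine-linear forward map composed with the convex $TV$-subgradient). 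This leaves the $B$-coupling term, which I would rearrange using the elementary polarization identity
\begin{align*}
(B(\mu^h_{n+1}-\mu^h_n),\mu^h_{n+1}-\mu^h_*) = \tfrac{1}{2}\|\mu^h_{n+1}-\mu^h_*\|_B^2 - \tfrac{1}{2}\|\mu^h_n-\mu^h_*\|_B^2 + \tfrac{1}{2}\|\mu^h_{n+1}-\mu^h_n\|_B^2,
\end{align*}
valid because $B$ is symmetric. Combining these yields the key one-step inequality
\begin{align*}
\|\mu^h_{n+1}-\mu^h_*\|_B^2 + \|\mu^h_{n+1}-\mu^h_n\|_B^2 \le \|\mu^h_n-\mu^h_*\|_B^2.
\end{align*}

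From this telescoping estimate part (ii) follows almost immediately: summing over $n$ shows $\sum_{n} \|\mu^h_{n+1}-\mu^h_n\|_B^2 \le \|\mu^h_0 - \mu^h_*\|_B^2 < \infty$, so the tail sums converge. To upgrade the summability to the pointwise rate $\mathcal{O}(1/n)$ I would also show that $\|\mu^h_{n+1}-\mu^h_n\|_B^2$ is itself nonincreasing in $n$; this is the standard trick that turns a summable sequence into one decaying like $1/n$, since if $a_n$ is nonincreasing and $\sum a_n < \infty$ then $n\,a_n \le \sum_{k\le n} a_k$ is bounded. Monotonicity of the increments should come from applying the one-step inequality structure to consecutive differences $\mu^h_{n+1}-\mu^h_n$ versus $\mu^h_n - \mu^h_{n-1}$, again using that $B$ is SPD and $A$ is monotone; because all norms on the finite-dimensional space $\mathcal{V}^h_1 \times (\mathcal{V}^h_0)^d$ are equivalent, the $B$-norm rate transfers to the fixed reference norm $\|\cdot\|$ in \eqref{7-12-16ct1}.

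For part (i), the bound $\|\mu^h_n - \mu^h_*\|_B^2 \le \|\mu^h_0 - \mu^h_*\|_B^2$ shows $(\mu^h_n)_n$ is bounded in the finite-dimensional space, so the Bolzano--Weierstrass theorem gives a convergent subsequence $\mu^h_{n_k} \to \mu^h_*$ (I would take $\mu^h_*$ as such a limit point, closing the apparent circularity by noting the construction only needs \emph{some} fixed point exists, which problem $(\mathcal{P}^h)$ guarantees). Since $\|\mu^h_{n+1}-\mu^h_n\|\to 0$, the limit point satisfies \eqref{6-12-16ct4} with the increment term vanishing, hence satisfies \eqref{6-12-16ct4*}; closedness of $F^h_{ad}$ and of $\mathcal{B}_1((\mathcal{V}^h_0)^d)$ ensures $\mu^h_* \in F^h_{ad}\times\mathcal{B}_1((\mathcal{V}^h_0)^d)$, and the characterization \eqref{1-12-16ct1} together with the optimality inequality forces $p^h_* \in \partial TV(f^h_*)$. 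By Lemma \ref{first-order-condition} (via Remark \ref{16-4-18ct3}), $f^h_*$ is then a minimizer of $(\mathcal{P}^h)$.

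\textbf{The main obstacle} I anticipate is the monotonicity of the increments needed to sharpen summability into the $\mathcal{O}(1/n)$ rate, since establishing $\|\mu^h_{n+2}-\mu^h_{n+1}\|_B \le \|\mu^h_{n+1}-\mu^h_n\|_B$ requires subtracting the variational inequality \eqref{6-12-16ct4} at two consecutive indices and carefully controlling the resulting monotonicity term $(A(\mu^h_{n+1})-A(\mu^h_n), \mu^h_{n+1}-\mu^h_n)\ge 0$ against the $B$-coupling. The nonsymmetric part of $A$ (the $u^h_a$ block, which is where the PDE-adjoint enters) must be reconciled with the symmetric $\overline{u^h_a}$ correction already built into $B$ via identity \eqref{6-12-16ct2}; keeping these bookkeeping pieces aligned so that the strict definiteness condition \eqref{5-12-16ct0} is exactly what makes $B - (\text{coupling})$ positive is the delicate step. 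Everything else is finite-dimensional linear algebra and the already-established convexity/monotonicity facts.
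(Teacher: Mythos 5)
Your proposal is correct and follows essentially the same route as the paper's proof: anchoring the $B$-norm Lyapunov functional at a solution of the first-order system \eqref{6-12-16ct4*}, deriving the one-step descent inequality via \eqref{6-12-16ct4} and the monotonicity of $A$ (which the paper verifies in \eqref{8-12-16ct1} exactly as you indicate), telescoping for boundedness and summability, extracting a convergent subsequence whose limit satisfies the optimality system, and obtaining the $\mathcal{O}(1/n)$ rate from summability combined with monotonicity of the increments proved by subtracting consecutive variational inequalities. The step you flag as the main obstacle is carried out in the paper precisely along the lines you sketch (equations \eqref{16-4-18ct4}--\eqref{16-4-18ct4*} and the ensuing polarization computation), so no genuine gap remains.
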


\begin{proof}
(i) Denote $\mu^h := (f^h,p^h)$, where $f^h\in F^h_{ad}$ is an arbitrary solution to $(\mathcal{P}^h)$ and $p^h\in \partial TV(f^h)$ generates from $f^h$, due to Lemma \ref{first-order-condition}. Using the notations \eqref{6-12-16ct4*} and \eqref{16-4-18ct2}, we have that
\begin{align}\label{6-12-16ct5}
\|\mu^h_{n+1}-\mu^h\|^2_B 
&= \|\mu^h_n - \mu^h +\mu^h_{n+1}-\mu^h_n\|^2_B \notag\\
&= \|\mu^h_n - \mu^h \|^2_B + \|\mu^h_{n+1}-\mu^h_n\|^2_B - 2\left( B(\mu^h_{n+1}-\mu^h_n), \mu^h-\mu^h_n\right). 
\end{align}
Talking $\nu^h=\mu^h$ in \eqref{6-12-16ct4}, we get
\begin{align*}
\left( B(\mu^h_{n+1}-\mu^h_n), \mu^h-\mu^h_n\right) 
&= \left( B(\mu^h_{n+1}-\mu^h_n), \mu^h-\mu^h_{n+1}\right) + \left( B(\mu^h_{n+1}-\mu^h_n), \mu^h_{n+1}-\mu^h_n\right)\\
&\ge \left( A(\mu^h_{n+1}), \mu^h_{n+1}-\mu^h\right) + \left( B(\mu^h_{n+1}-\mu^h_n), \mu^h_{n+1}-\mu^h_n\right)\\
&= \left( A(\mu^h), \mu^h_{n+1}-\mu^h\right) + \left( A(\mu^h_{n+1}) -A(\mu^h), \mu^h_{n+1}-\mu^h\right) + \|\mu^h_{n+1}-\mu^h_n\|^2_B
\end{align*}
and so that \eqref{6-12-16ct5} yields
\begin{align}\label{6-12-16ct5*}
\|\mu^h_{n+1}-\mu^h\|^2_B 
&\le \|\mu^h_n - \mu^h \|^2_B - \|\mu^h_{n+1}-\mu^h_n\|^2_B \notag\\
&~\quad - 2\left( A(\mu^h), \mu^h_{n+1}-\mu^h\right) - 2\left( A(\mu^h_{n+1}) - A(\mu^h), \mu^h_{n+1}-\mu^h\right).
\end{align}
Talking $\nu^h=\mu^h_{n+1}$ in \eqref{6-12-16ct4*}, we have
\begin{align}\label{6-12-16ct4**}
\left( A(\mu^h), \mu^h_{n+1} - \mu^h\right) \ge 0.
\end{align}
On the other hand, by \eqref{6-12-16ct3}, we get
\begin{align}\label{8-12-16ct1}
&\left( A(\mu^h_{n+1}) - A(\mu^h), \mu^h_{n+1}-\mu^h\right) \notag\\
&~\quad =-\rho\left( \div(p^h_{n+1}-p^h), f^h_{n+1}-f^h\right) + \left(u^h_a(f^h_{n+1}) - u^h_a(f^h), f^h_{n+1}-f^h\right)_\Omega - \rho \left( \nabla(f^h_{n+1}-f^h), p^h_{n+1}-p^h\right)_\Omega\notag\\
&~\quad = \left(u^h_a(f^h_{n+1}) - u^h_a(f^h), f^h_{n+1}-f^h\right)_\Omega\notag\\
&~\quad = \left(\overline{u^h_a}(f^h_{n+1}-f^h), f^h_{n+1}-f^h\right)_\Omega ,\quad \mbox{by}~  \eqref{6-12-16ct2}\notag\\
&~\quad = a\left( \overline{u^h}(f^h_{n+1}-f^h), \overline{u^h_a}(f^h_{n+1}-f^h)\right),\quad \mbox{by}~  \eqref{30-11-16ct3*}\notag\\
&~\quad = \left( \overline{u^h}(f^h_{n+1}-f^h), \overline{u^h}(f^h_{n+1}-f^h)\right)_\Gamma,\quad \mbox{by}~  \eqref{6-12-16ct1}\notag\\
&~\quad \ge 0.
\end{align}
Combining this with \eqref{6-12-16ct4**}--\eqref{6-12-16ct5*}, we arrive at
\begin{align}\label{6-12-16ct5**}
\|\mu^h_{n+1}-\mu^h\|^2_B 
&\le \|\mu^h_n - \mu^h \|^2_B - \|\mu^h_{n+1}-\mu^h_n\|^2_B.
\end{align}
Then for all $n\in \N$ it follows from \eqref{6-12-16ct5**} that
\begin{align}\label{7-12-16ct1*}
\|\mu^h_{n+1}-\mu^h\|^2_B + \sum_{m=0}^n \|\mu^h_{m+1}-\mu^h_m\|^2_B \le \|\mu^h_0 - \mu^h \|^2_B.
\end{align}
Therefore, the sequence ${(\mu^h_n)}_n$ is bounded while the series  $\sum_{n=0}^\infty \|\mu^h_{n+1}-\mu^h_n\|^2_B$ is convergent. A subsequence of ${(\mu^h_n)}_n$ not relabeled and an element $\mu^h_* := (f^h_*, p^h_*)\in F^h_{ad}\times \mathcal{B}_1\left( (\mathcal{V}^h_0)^d\right)$ then exist such that
\begin{align*}
\limn \mu^h_{n+1} = \limn \mu^h_n = \mu^h_*.
\end{align*}
Thus sending $n$ to $\infty$ in \eqref{6-12-16ct4}, we obtain
\begin{align}\label{6-12-16ct4***}
\left( A(\mu^h_*), \nu^h - \mu^h_*\right) \ge 0 
\end{align}
for all $\nu^h\in F^h_{ad}\times \mathcal{B}_1\left( (\mathcal{V}^h_0)^d\right)$. As a direct consequence of \eqref{6-12-16ct4***} (cf.\ Remark \ref{16-4-18ct3}) we get 
$$\left( \nabla f^h_*, q^h - p^h_*\right)_\Omega \le 0 \quad \mbox{for all} \quad q^h\in (\mathcal{V}^h_0)^d, \quad \|q^h\|_\infty \le 1$$
and so that
\begin{align*}
(\nabla f^h_*, p^h_*)_\Omega = \max_{q^h\in (\mathcal{V}^h_0)^d,~\|q^h\|_\infty \le 1} (\nabla f^h_*, q^h)_\Omega = \int_\Omega |\nabla f^h_*| = TV(f^h_*).
\end{align*}
This together with Lemma \ref{subgradient} yields $p^h_* \in \partial TV(f^h_*)$. Consequently, $f^h_*$ is then a minimizer to $(\mathcal{P}^h)$, due to \eqref{6-12-16ct4***} and Lemma \ref{first-order-condition}.

(ii) It remains to show \eqref{7-12-16ct1}. For all $n\in \N$ we get from \eqref{7-12-16ct1*} that
\begin{align}\label{7-12-16ct1**}
\sum_{m=0}^n \|\mu^h_{m+1}-\mu^h_m\|^2_B \le \|\mu^h_0 - \mu^h \|^2_B.
\end{align}
On the other hand, in view of \eqref{6-12-16ct4} we have
\begin{align*}
\left( A(\mu^h_{n+1}) + B(\mu^h_{n+1}-\mu^h_n), \mu^h_{n+2} - \mu^h_{n+1}\right) &\ge 0 \\
\left( A(\mu^h_{n+2}) + B(\mu^h_{n+2}-\mu^h_{n+1}), \mu^h_{n+1} - \mu^h_{n+2}\right) &\ge 0
\end{align*}
which implies
\begin{align*}
\left( A(\mu^h_{n+2}) - A(\mu^h_{n+1}) + B(\mu^h_{n+2}-\mu^h_{n+1}) - B(\mu^h_{n+1}-\mu^h_n), \mu^h_{n+1} - \mu^h_{n+2}\right) \ge 0
\end{align*}
or
\begin{align}\label{16-4-18ct4}
\left( B(\mu^h_{n}-\mu^h_{n+1}) - B(\mu^h_{n+1}-\mu^h_{n+2}), \mu^h_{n+1} - \mu^h_{n+2}\right) \ge \left( A(\mu^h_{n+2}) - A(\mu^h_{n+1}),\mu^h_{n+2} - \mu^h_{n+1}\right).
\end{align}
In view of \eqref{8-12-16ct1} we get
\begin{align}\label{16-4-18ct4*}
\left( A(\mu^h_{n+2}) - A(\mu^h_{n+1}),\mu^h_{n+2} - \mu^h_{n+1}\right)
\ge \left( \overline{u^h}(f^h_{n+2}-f^h_{n+1}), \overline{u^h}(f^h_{n+2}-f^h_{n+1})\right)_\Gamma \ge 0
\end{align}
which together with 
$$\left( B(\mu^h_{n}-\mu^h_{n+1}) - B(\mu^h_{n+1}-\mu^h_{n+2}), (\mu^h_{n}-\mu^h_{n+1}) - (\mu^h_{n+1}-\mu^h_{n+2})\right) = \|\mu^h_{n}-2\mu^h_{n+1}+\mu^h_{n+2}\|^2_B$$
imply that
\begin{align*}
&\|\mu^h_{n}-2\mu^h_{n+1}+\mu^h_{n+2}\|^2_B\\
&= \left( B(\mu^h_{n}-\mu^h_{n+1}) - B(\mu^h_{n+1}-\mu^h_{n+2}), \mu^h_{n}-\mu^h_{n+1}\right) - \left( B(\mu^h_{n}-\mu^h_{n+1}) - B(\mu^h_{n+1}-\mu^h_{n+2}), (\mu^h_{n+1}-\mu^h_{n+2})\right)\\
&\le \left( B(\mu^h_{n}-\mu^h_{n+1}) - B(\mu^h_{n+1}-\mu^h_{n+2}), \mu^h_{n}-\mu^h_{n+1}\right), \quad \mbox{by}~  \eqref{16-4-18ct4}-\eqref{16-4-18ct4*}\\
&= \frac{1}{2}\left( B(\mu^h_{n}-\mu^h_{n+1}) - B(\mu^h_{n+1}-\mu^h_{n+2}), \big((\mu^h_{n}-\mu^h_{n+1}) + (\mu^h_{n+1}-\mu^h_{n+2})\big) + \big((\mu^h_{n}-\mu^h_{n+1}) - (\mu^h_{n+1}-\mu^h_{n+2})\big)\right) \\
&= \frac{1}{2}\left( B(\mu^h_{n}-\mu^h_{n+1}) - B(\mu^h_{n+1}-\mu^h_{n+2}), (\mu^h_{n}-\mu^h_{n+1}) + (\mu^h_{n+1}-\mu^h_{n+2})\right)\\
&~\quad + \frac{1}{2}\left( B(\mu^h_{n}-\mu^h_{n+1}) - B(\mu^h_{n+1}-\mu^h_{n+2}), (\mu^h_{n}-\mu^h_{n+1}) - (\mu^h_{n+1}-\mu^h_{n+2})\right)\\
&= \frac{1}{2}\left( \|\mu^h_{n}-\mu^h_{n+1}\|^2_B - \|\mu^h_{n+1}-\mu^h_{n+2}\|^2_B + \|\mu^h_{n}-2\mu^h_{n+1}+\mu^h_{n+2}\|^2_B\right) 
\end{align*}
and so that
\begin{align*}
\|\mu^h_{n+2}-\mu^h_{n+1}\|^2_B \le \|\mu^h_{n+1}-\mu^h_{n}\|^2_B
\end{align*}
for all $n\in \N$. Therefore, we arrive at
\begin{align*}
(n+1)\|\mu^h_{n+1}-\mu^h_n\|^2_B 
&= \underbrace{\|\mu^h_{n+1}-\mu^h_n\|^2_B+...+\|\mu^h_{n+1}-\mu^h_n\|^2_B}_{(n+1)-\mbox{times}}\\
&\le \|\mu^h_{n+1}-\mu^h_n\|^2_B + \|\mu^h_{n}-\mu^h_{n-1}\|^2_B +...+ \|\mu^h_1-\mu^h_0\|^2_B\\
&= \sum_{m=0}^n \|\mu^h_{m+1}-\mu^h_m\|^2_B \\
&\le \|\mu^h_0 - \mu^h \|^2_B, 
\end{align*}
by \eqref{7-12-16ct1**}. The proof is completed.
\end{proof}

\section{Numerical test}\label{Numer_text}

We now illustrate the theoretical result with numerical examples. For simplicity of exposition we restrict ourselves to the case where $\beta=\sigma=0$ in the system \eqref{17-5-16ct1}, and thus consider the Neumann problem
\begin{align}
-\nabla \cdot \big(\alpha \nabla u \big) &= f^\dag \mbox{~in~} \Omega,  \label{17-5-16ct1*}\\
\alpha \nabla u \cdot \vec{n} &= j  \mbox{~on~} \partial\Omega \label{17-5-16ct3*}
\end{align}
with $\Omega = \{ x = (x_1,x_2) \in {R}^2 ~|~ -1 < x_1, x_2 < 1\}$. Let $\chi_D$ denote the characteristic functional of a Lebesgue measurable set $D\subset \mathbb{R}^2$. 
We assume that entries of the known symmetric diffusion matrix $\alpha$ are defined as
$$\alpha_{11}= 3\chi_{\Omega_{11}} + \chi_{\Omega\setminus\Omega_{11}}, ~
\alpha_{12}= \chi_{\Omega_{12}}, ~
\alpha_{22}= 4\chi_{\Omega_{22}} + 2\chi_{\Omega\setminus\Omega_{22}},$$
where
\begin{align*}
&\Omega_{11} := \left\{ (x_1, x_2) \in \Omega ~\big|~ |x_1| \le 1/2 \mbox{~and~} |x_2| \le 1/2 \right\},~ \Omega_{22} := \left\{ (x_1, x_2) \in \Omega ~\big|~ x_1^2 + x_2^2 \le  1/4 \right\} \mbox{~and~}\\
&\Omega_{12} := \left\{ (x_1, x_2) \in \Omega ~\big|~ |x_1| + |x_2| \le 1/2 \right\},
\end{align*}
see Figure \ref{h1} (bottom row, right). The sought source functional $f^\dag$ in \eqref{17-5-16ct1*} is assumed to be discontinuous and defined as
$$f^\dag = \left( 2 -\frac{\pi}{8}\right)  \chi_{\Omega_{22}} - \frac{\pi}{8} \chi_{\Omega \setminus \Omega_{22}}.$$
We adopt the mesh data structure of \cite{goc06} to our numerical implementation. For this purpose we divide the interval $(-1,1)$ into $\ell$ equal segments, so that the domain $\Omega = (-1,1)^2$ is divided into $2\ell^2$ triangles, where the diameter of each triangle is $h = h_{\ell} = \frac{\sqrt{8}}{\ell}$. 
The Neumann boundary data $j  $ in the equation \eqref{17-5-16ct3*} is chosen as
\begin{equation}\label{20-6-17ct5}
\begin{aligned}
j  
&= \chi_{(0,1]\times\{-1\}} - \chi_{[-1,0]\times\{1\}} + 2\chi_{(0,1]\times\{1\}} -2\chi_{[-1,0]\times\{-1\}}\\
&~\quad +3\chi_{\{-1\}\times(-1,0]} - 3\chi_{\{1\}\times(0,1)} + 4\chi_{\{1\}\times(-1,0]}  - 4\chi_{\{-1\}\times(0,1)} .
\end{aligned}
\end{equation} 
We note that \eqref{17-5-16ct1*}--\eqref{17-5-16ct3*} is the pure Neumann boundary value problem, so $f^\dag$ and $j $ were taken such that the compatibility condition $\int_\Omega f^\dag + \int_{\partial\Omega} j  =0$ is satisfied. Let $u_j(f^\dag)$ denote the unique weak solution of  \eqref{17-5-16ct1*}--\eqref{17-5-16ct3*}. The Dirichlet boundary data on $\Gamma$ of the problem \eqref{17-5-16ct1*}--\eqref{17-5-16ct3*} is computed by 
$$g^\dag := \big(u_j(f^\dag) \big)_{|\Gamma}.$$
We use Algorithm \ref{31-8-18ct1} which is described in Section \ref{primal-dual} for computing the numerical solution of the problem $\left(\mathcal{P}^{h} \right)$. Before doing so, we discuss the constants $c_\gamma$ and $c_1$ appearing in \eqref{5-12-16ct0}. First, we can show the inequality (see, e.g., \cite[\S 7]{HiQu16})
\begin{align}\label{5-9-18ct2}
\frac{\underline{\alpha}}{1 + \left( \sqrt{\frac{3}{2}}\right)^{(d+2)/2}|\Omega|^{1/d}} \|u\|_{1,\Omega} \le a(u,u)
\end{align}
for all $u\in H^1(\Omega)$. Therefore, the constant $c_1$ can be chosen by $\frac{\underline{\alpha}}{1 + \left( \sqrt{\frac{3}{2}}\right)^{(d+2)/2}|\Omega|^{1/d}}$. Furthermore, in the case $\Omega = (-1,1)^2$, the following estimate provides an upper bound for $c_\gamma$.

\begin{lemma}
If $\Omega := (a_1,b_1) \times (a_2,b_2) \times\ldots \times(a_d,b_d) \ni 0$, then $c_\gamma$ can be chosen by $\sqrt{\frac{d+|\overline{\hbar}|^2}{|\underline{\hbar}|}}$, i.e. for all $u\in H^1(\Omega)$, the estimate
\begin{align}\label{5-9-18ct1}
\|\gamma u\|_{L^2(\partial\Omega)} \le \sqrt{\frac{d+|\overline{\hbar}|^2}{|\underline{\hbar}|}} \|u\|_{H^1(\Omega)}
\end{align}
holds true, where $\underline{\hbar} = \min \{|a_1|, |b_1|, \ldots, |a_d|, |b_d|\}$ and $\overline{\hbar} = \max \{|a_1|, |b_1|, \ldots, |a_d|, |b_d|\}$.
\end{lemma}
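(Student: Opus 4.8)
The plan is to prove the equivalent squared estimate
$\underline{\hbar}\,\|\gamma u\|_{L^2(\partial\Omega)}^2 \le \big(d+|\overline{\hbar}|^2\big)\,\|u\|_{1,\Omega}^2$
first for smooth $u$, and then pass to general $u\in H^1(\Omega)$ by density. Since the box $\Omega$ is Lipschitz, $C^1(\overline{\Omega})$ is dense in $H^1(\Omega)$ and the trace operator $\gamma$ is continuous (cf.\ \eqref{17-10-16ct4*}), so it suffices to establish the inequality for $u\in C^1(\overline{\Omega})$ and let the approximation carry it over to $H^1(\Omega)$.

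For the core estimate I would fix $k\in\{1,\dots,d\}$ and apply the divergence theorem to the vector field $x_k u^2 e_k$ (only the $k$-th component nonzero), whose divergence is $u^2+2x_k u\,\partial_{x_k}u$. This gives
$$\int_\Omega\big(u^2+2x_k u\,\partial_{x_k}u\big)\,dx=\int_{\partial\Omega} x_k u^2 n_k\,dS,$$
where $n_k$ is the $k$-th component of the outward normal. On $\partial\Omega$ the factor $n_k$ vanishes except on the two faces $\Gamma_k^+:=\{x_k=b_k\}$ and $\Gamma_k^-:=\{x_k=a_k\}$, where $n_k=+1,\ x_k=b_k$ and $n_k=-1,\ x_k=a_k$ respectively. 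Here the hypothesis $0\in\Omega$ is essential: it forces $a_k<0<b_k$, so the boundary term equals $b_k\int_{\Gamma_k^+}u^2\,dS + |a_k|\int_{\Gamma_k^-}u^2\,dS$ with both coefficients positive and bounded below by $\underline{\hbar}=\min_i\{|a_i|,|b_i|\}$. Consequently
$$\underline{\hbar}\int_{\Gamma_k^+\cup\Gamma_k^-}u^2\,dS\le \int_\Omega u^2\,dx + 2\int_\Omega x_k u\,\partial_{x_k}u\,dx.$$

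Summing over $k=1,\dots,d$ and noting that the faces $\Gamma_k^{\pm}$ exhaust $\partial\Omega$, I obtain
$$\underline{\hbar}\,\|\gamma u\|_{L^2(\partial\Omega)}^2\le d\,\|u\|_\Omega^2 + 2\int_\Omega u\,(x\cdot\nabla u)\,dx.$$
It then remains to control the cross term. Using $|x\cdot\nabla u|\le |x|\,|\nabla u|$, the weighted Young inequality $2|u|\,|x|\,|\nabla u|\le \tfrac1d|x|^2u^2 + d|\nabla u|^2$, and the bound $|x|^2=\sum_k x_k^2\le d|\overline{\hbar}|^2$, the cross term is estimated by $|\overline{\hbar}|^2\|u\|_\Omega^2 + d\|\nabla u\|_\Omega^2$. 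Substituting and using $d\le d+|\overline{\hbar}|^2$ yields $\underline{\hbar}\,\|\gamma u\|_{L^2(\partial\Omega)}^2\le (d+|\overline{\hbar}|^2)\big(\|u\|_\Omega^2+\|\nabla u\|_\Omega^2\big)=(d+|\overline{\hbar}|^2)\|u\|_{1,\Omega}^2$, which is the claim after dividing by $\underline{\hbar}$ and taking square roots.

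I expect the only delicate points to be bookkeeping rather than depth: correctly tracking the signs of the two boundary contributions, which is exactly where $0\in\Omega$ is indispensable since it guarantees $a_k<0<b_k$ and hence $\underline{\hbar}>0$, and choosing the Young weight $\lambda=1/d$ so that the constants collapse precisely to $\frac{d+|\overline{\hbar}|^2}{|\underline{\hbar}|}$ rather than to a larger value. A different weight would still prove a trace inequality, but with a worse constant, so matching the stated sharp constant is the step that requires the most care.
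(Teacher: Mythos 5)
Your proof is correct and yields the stated constant exactly, but it takes a recognizably different route from the paper's. The paper argues one coordinate direction at a time and pointwise along segments: for $u\in C^1(\overline{\Omega})$ it writes $u(x',a_d)=-\frac{1}{a_d}\int_{a_d}^{0}\partial_{x_d}\big(x_d\,u(x',x_d)\big)\,dx_d$ (the fundamental theorem of calculus applied to $x_d u$, using that the hyperplane $\{x_d=0\}$ lies inside $\Omega$ precisely because $0\in\Omega$), then squares and applies Cauchy--Schwarz with the split $(\sqrt{d},\,x_d)$ against $(u/\sqrt{d},\,\partial_{x_d}u)$, integrates over the pair of opposite faces to get $\|u\|^2_{L^2(\underline{S}_d\cup\overline{S}_d)}\le \frac{d+|\overline{\hbar}|^2}{|\underline{\hbar}|}\int_\Omega\big(\frac{u^2}{d}+(\partial_{x_d}u)^2\big)$, and sums over the $d$ directions to assemble the full $H^1$-norm. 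Your divergence-theorem identity for the field $x_k u^2 e_k$ is in effect the integrated version of this step (applied to $u^2$ rather than to $u$), and your weighted Young inequality with weight $1/d$, combined with $|x|^2\le d|\overline{\hbar}|^2$, plays exactly the role of the paper's weighted Cauchy--Schwarz --- which is why both arguments land on the same constant. The hinge is identical in the two proofs: $0\in\Omega$ forces $a_k<0<b_k$, so the boundary weight $x_k n_k$ equals $b_k$ resp.\ $|a_k|$ on the two relevant faces and is bounded below by $\underline{\hbar}>0$. What your version buys is brevity and a purely integral argument, with the cross term $2\int_\Omega u\,(x\cdot\nabla u)$ estimated in one stroke; what the paper's version buys is a per-face bound involving only the single normal derivative $\partial_{x_k}u$, a slightly finer piece of information that your summed estimate $d\|\nabla u\|^2_\Omega$ gives up harmlessly (you relax $d\le d+|\overline{\hbar}|^2$ at the end in any case). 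Your closing density step --- $C^1(\overline{\Omega})$ dense in $H^1(\Omega)$ together with continuity of the trace operator --- coincides with the paper's.
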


\begin{proof}
Denote by $\underline{S}_d := \left\{\big(x', \underline{\varphi}(x')\big) \in \overline{\Omega} ~\big|~ x' := (x_1,\ldots,x_{d-1}) ~\mbox{and}~ \underline{\varphi}(x') \equiv a_d\right\}$. First, we assume that $u\in C^1(\overline{\Omega})$. Then, we get
\begin{align*}
u(x',a_d) = -\frac{1}{a_d}\int_{a_d}^0 \frac{\partial\big(x_du(x',x_d)\big)}{\partial x_d}dx_d = -\frac{1}{a_d} \int_{a_d}^0 \left( u(x',x_d) + x_d\frac{\partial u(x',x_d)}{\partial x_d} \right)dx_d
\end{align*}
which implies that
\begin{align*}
|a_d|^2 |u(x',a_d)|^2 
&\le \left(\int_{a_d}^0 \left| u(x',x_d) + x_d\frac{\partial u(x',x_d)}{\partial x_d} \right|dx_d\right)^2 \\
&\le |a_d| \int_{a_d}^0 \left| \sqrt{d} \frac{u(x',x_d)}{\sqrt{d}} + x_d\frac{\partial u(x',x_d)}{\partial x_d} \right|^2dx_d \\
&\le |a_d| \big(d+|a_d|^2\big) \int_{a_d}^0 \left( \frac{u^2(x',x_d)}{d} + \left(\frac{\partial u(x',x_d)}{\partial x_d} \right)^2\right)dx_d.
\end{align*}
Multiplying this inequality by $\sqrt{1+ \left(\frac{\partial\underline{\varphi}}{x_1}\right)^2 +\ldots+ \left(\frac{\partial\underline{\varphi}}{x_{d-1}}\right)^2} \equiv 1$,  integrating over $Q:= (a_1,b_1) \times (a_2,b_2) \times\ldots \times(a_{d-1},b_{d-1})$, we obtain
\begin{align*}
\|u\|^2_{L^2(\underline{S}_d)} 
&\le \frac{d+|\overline{\hbar}|^2}{|\underline{\hbar}|} \int_Q \left(\int_{a_d}^0 \left( \frac{u^2(x',x_d)}{d} + \left(\frac{\partial u(x',x_d)}{\partial x_d} \right)^2\right)dx_d\right)dx'.
\end{align*}
Likewise, denoting by $\overline{S}_d := \left\{\big(x',\overline{\varphi}(x')\big) \in \overline{\Omega} ~\big|~ x' := (x_1,\ldots,x_{d-1}) ~\mbox{and}~ \overline{\varphi}(x') \equiv b_d\right\}$, we also get
\begin{align*}
\|u\|^2_{L^2(\overline{S}_d)} 
&\le \frac{d+|\overline{\hbar}|^2}{|\underline{\hbar}|} \int_Q \left(\int_0^{b_d} \left( \frac{u^2(x',x_d)}{d} + \left(\frac{\partial u(x',x_d)}{\partial x_d} \right)^2\right)dx_d\right)dx'
\end{align*}
and arrive at
\begin{align*}
\|u\|^2_{L^2(\underline{S}_d \cup \overline{S}_d)} 
\le \frac{d+|\overline{\hbar}|^2}{|\underline{\hbar}|} \int_\Omega \left( \frac{u^2(x)}{d} + \left(\frac{\partial u(x)}{\partial x_d} \right)^2\right)dx.
\end{align*}
This implies that \eqref{5-9-18ct1} is satisfied for all $u\in C^1(\overline{\Omega})$. By the everywhere dense property of the set $C^1(\overline{\Omega})$ in $H^1(\Omega)$, we conclude that \eqref{5-9-18ct1} is also fulfilled for all $u\in H^1(\Omega)$, which finishes the proof.
\end{proof}

The constant $\underline{\alpha}$ in \eqref{27-5-19ct1} is taken by $0.1$. With $d=2$, $\Omega = (-1,1)^2$ we then can take $c_1= 0.025$ and $c_\gamma = \sqrt{3}$. As an initial approximation we choose $f^{h_\ell}_0(x) \equiv 1 \in \mathcal{V}^{h_\ell}_1$ and $p^{h_\ell}_0(x) \equiv (\frac{1}{2},\ldots,\frac{1}{2}) \in (\mathcal{V}^{h_\ell}_0)^d$. In the computation below we choose $\tau = 2\cdot10^{-4}$ and $\theta = 5\cdot10^{-2}$.

For observations with noise we assume that
\begin{align}\label{3-7-17ct1}
z_{\delta_{\ell}} = g^\dag+ \theta_\ell\cdot R_{g^\dag} \quad \mbox{for some} \quad \theta_\ell>0 \quad \mbox{depending on} \quad \ell,
\end{align}
where $R_{g^\dag}$ is the matrix of random numbers on the interval $(-1,1)$. We here refer to the discussion in \cite{cla12} which states that the $L^\infty$-misfit term is well suited for this uniformly distributed noise model. However, since $L^\infty$ is a non-reflexive Banach space and the $L^\infty$-norm is not differentiable the use of this misfit term would cause difficulties in the numerical treatment of ill-posed inverse problems.

The measurement error is then computed as $\delta_\ell = \big\|z_{\delta_\ell} -g^\dag\big\|_{L^2(\Gamma)}.$ 
To satisfy the condition \eqref{31-8-16ct1} in Theorem \ref{stability2}  we take $h=h_\ell$, $\rho = \rho_\ell = 10^{-3}h^{1/2}_\ell$ and $\theta_\ell = h_\ell \rho^{1/2}_\ell$. 

Starting with the coarsest level $\ell =4$, we follow \cite[pp.\ 16]{kel99} in the use of the stopping criterion. Here at each iteration $k$ we compute
\begin{align}\label{28-5-19ct1}
\mbox{Tolerance}:= \big\|\nabla J^{h_{\ell}} \big( f^{h_\ell}_k \big) \big\|_{L^2(\Omega)}-\tau_1-\tau_2\big\|\nabla J^{h_{\ell}} \big( f^{h_\ell}_0 \big) \big\|_{L^2(\Omega)},
\end{align}
with $\tau_1 := 10^{-5}h_\ell^{1/2}$ and $\tau_2 := 10^{-4}h_\ell^{1/2}$ and the iteration was stopped if
$\mbox{Tolerance} \le 0$
or the number of iterations reached the maximum iteration count of $N=600$. 

After obtaining the numerical solution of the first iteration process with respect to the coarsest level $\ell=4$, we use its interpolation on the next finer mesh $\ell=8$ as an initial approximation $(f^{h_\ell}_0,p^{h_\ell}_0)$ for the algorithm on this finer mesh, and proceed similarly on the refinement levels.

Let $f_\ell$ denote the functional which is obtained at {\it the final iterate} of Algorithm \ref{31-8-18ct1} corresponding to the refinement level $\ell$. Furthermore, we denote by $u_\ell$ {\it the computed numerical solution} to the Dirichlet problem
\begin{align*}
-\nabla \cdot (\alpha\nabla u) = f_\ell \quad\mbox{in}\quad \Omega \quad\mbox{and}\quad
u =
\begin{cases}
\big(u_j(f^\dag) \big)_{|\partial\Omega\setminus\Gamma} &\mbox{on}\quad \partial\Omega\setminus\Gamma,\\
\big(u_j(f_\ell) \big)_{|\Gamma} &\mbox{on}\quad \Gamma,
\end{cases}
\end{align*}
while by $u^\dag$ the solution of the equation \eqref{17-5-16ct1*} supplemented with the Dirichlet boundary condition $\big(u_j(f^\dag) \big)_{|\partial\Omega}$.

First, we consider the case where $\Gamma$ is the bottom surface of the domain $\Omega$, i.e. $\Gamma = \{ x = (x_1,x_2) \in \overline{\Omega} ~|~ x_2=-1\}$. The numerical results of this case are summarized in Table \ref{b1} and Table \ref{b2}, where we present the refinement level $\ell$, mesh size $h_\ell$ of the triangulation, regularization parameter $\rho_\ell$, measured noise $\delta_\ell$, number of iterations, value of tolerances and errors $\big\|f^\dag - f_\ell \big\|_{\Omega}$, $\big\|u^\dag - u_\ell\big\|_{\Omega}$ and $\big\|u^\dag - u_\ell\big\|_{1,\Omega}$. 

\begin{table}[H]
\begin{center}
\begin{tabular}{|c|l|l|l|l|l|}
\hline
$\ell$ &\scriptsize $h_\ell$ &\scriptsize $\rho_\ell$ &\scriptsize $\delta_\ell$ &\scriptsize {Iterate} &\scriptsize {Tolerance} \\
\hline
4   &0.7071 & 8.4090e-4& 2.3763e-2& 152&  -4.0518e-6\\
\hline
8  &0.3536 & 5.9460e-4& 8.8717e-3& 264& -2.8826e-7\\
\hline
16  &0.1766 & 4.2045e-4& 2.5872e-3& 343&   -1.4460e-8\\
\hline
32  &8.8388e-2 & 2.9730e-4& 1.1817e-3& 471&  -1.1382e-7\\
\hline
64  &4.4194e-2 & 2.1022e-4& 5.6112e-4& 561& -4.6128e-8\\
\hline
\end{tabular}
\caption{Refinement level $\ell$, mesh size $h_\ell$ of the triangulation, regularization parameter $\rho_\ell$,  measurement noise $\delta_\ell$, number of iterates and value of tolerances.}
\label{b1}
\end{center}
\end{table}

\begin{table}[H]
\begin{center}
\begin{tabular}{|c|l|l|l|}
 \hline
$\ell$ &\scriptsize $\big\|f^\dag - f_\ell \big\|_{\Omega}$ &\scriptsize $\big\|u^\dag - u_\ell\big\|_{\Omega}$ &\scriptsize $\big\|u^\dag - u_\ell\big\|_{1,\Omega}$ \\
\hline
4&    0.6829     & 1.5022e-2    & 3.9121e-2 \\
\hline
8&   0.3506     & 8.5904e-3    & 3.3448e-2 \\
\hline
16&   0.1692   & 3.2470e-3    & 2.2919e-2 \\
\hline
32&   0.1095   & 1.2926e-3    & 1.5372e-2 \\
\hline
64&   0.7403e-2  & 6.0377e-4    & 1.2252e-2 \\
\hline
\end{tabular}
\caption{Errors  $\big\|f^\dag - f_\ell \big\|_{\Omega}$,  $\big\|u^\dag - u_\ell\big\|_{\Omega}$ and $\big\|u^\dag - u_\ell\big\|_{1,\Omega}$.}
\label{b2}
\end{center}
\end{table}

\begin{figure}[H]
\begin{center}
\includegraphics[scale=0.4]{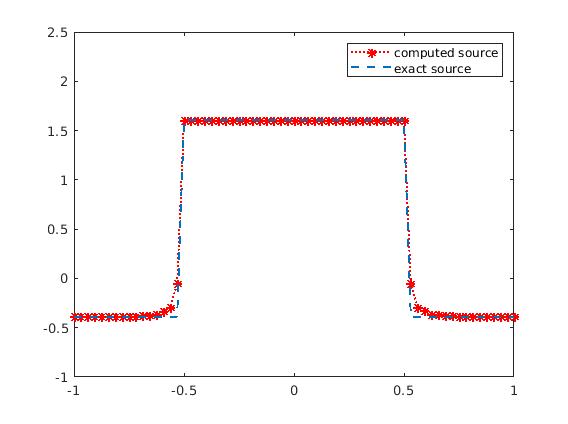}
\includegraphics[scale=0.4]{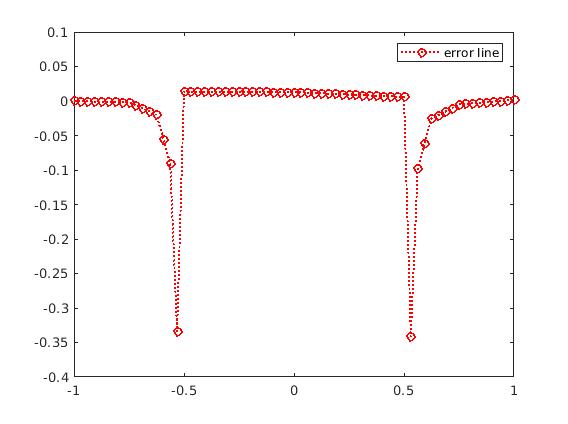}
\includegraphics[scale=0.4]{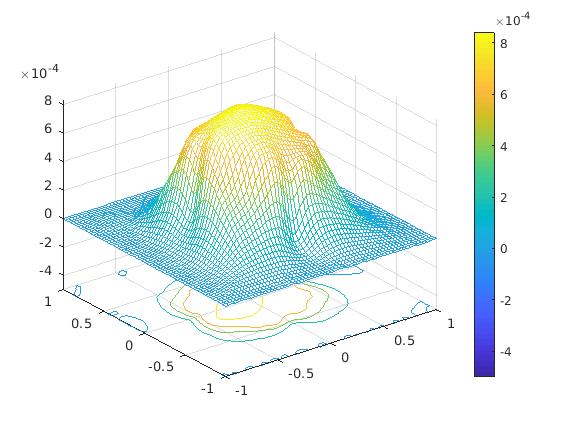}
\includegraphics[scale=0.4]{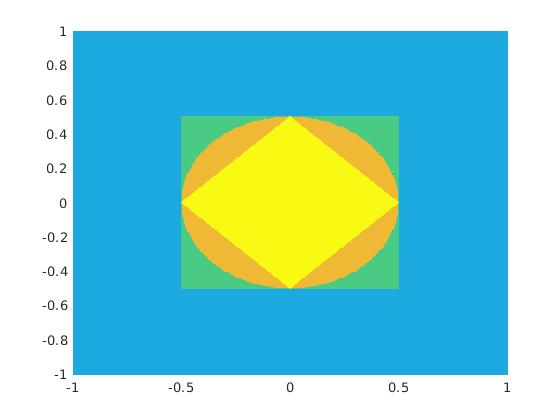}
\end{center}
\caption{Observations taking at the bottom surface $(\ell = 64)$: Interpolation $I^{h_\ell}_1 f^\dag(x_1,0)$ together with computed numerical solution $f_\ell(x_1,0)$ of the algorithm at the final iteration (Top row, left), the differences $I^{h_\ell}_1 f^\dag(x_1,0) - f_\ell(x_1,0)$ (Top row, right) and $u^\dag - u_\ell$ (Bottom row, left). Bottom row, right: the domain $\Omega$ together with the subdomains $\Omega_{11}$, $\Omega_{22}$, and $\Omega_{12}$.}
\label{h1}
\end{figure}

We now consider the case where $\Gamma$ includes the bottom surface and the left surface of the domain $\Omega$, i.e. $\Gamma = \{ x = (x_1,x_2) \in \overline{\Omega} ~|~ x_2=-1\} \cup \{ x = (x_1,x_2) \in \overline{\Omega} ~|~ x_1=-1\}$. To try obtaining an error level similar to the case where observations were taken on the bottom surface only, $\theta_\ell$ in \eqref{3-7-17ct1} is changed to $\theta_\ell = \frac{1}{2}h_\ell \rho^{1/2}_\ell$. 

Computations show that the measurement noise $\delta_\ell = 4.6039.10^{-4}$, $\mbox{Tolerance} = -1.8493.10^{-8}$, the iteration stops at $n=477$. Errors $\big\|f^\dag - f_\ell\big\|_{\Omega} = 6.1605.10^{-2}$,  $\big\|u^\dag - u_\ell\big\|_{\Omega}=5.8110.10^{-4}$ and $\big\|u^\dag - u_\ell\big\|_{1,\Omega}=1.1164.10^{-2}$. 

\begin{figure}[H]
\begin{center}
\includegraphics[scale=0.4]{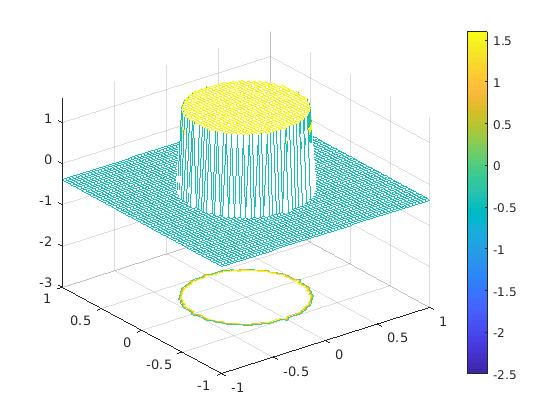}
\includegraphics[scale=0.4]{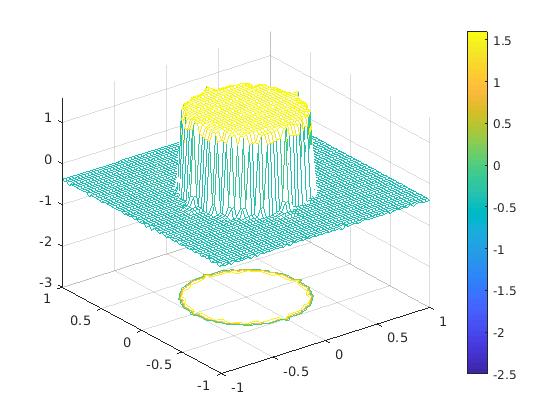}
\includegraphics[scale=0.4]{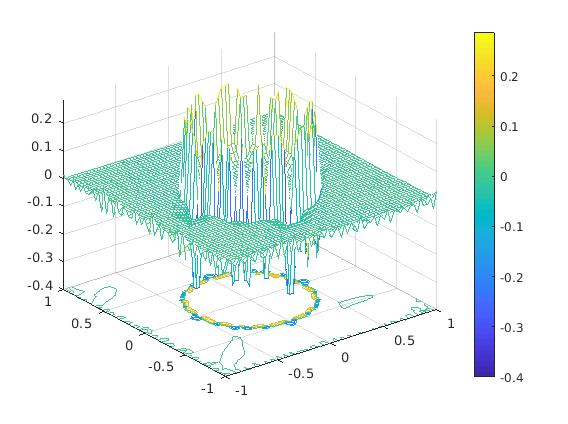}
\includegraphics[scale=0.4]{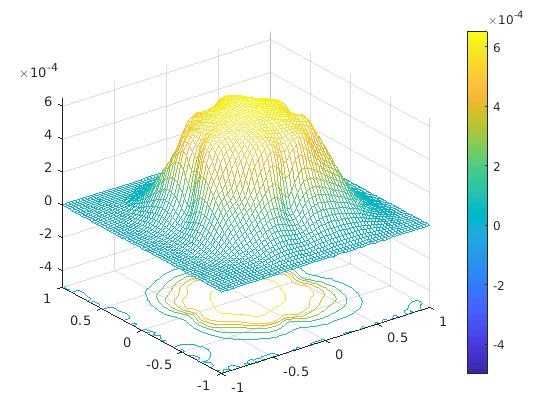}
\end{center}
\caption{Observations taking at the bottom surface and the left surface $(\ell = 64)$: Interpolation $I^{h_\ell}_1 f^\dag$ (Top row, left), computed numerical solution $f_\ell$ of the algorithm at the final  iteration (Top row, right), the differences $I_1^{h_{\ell}} f^\dag - f_\ell$ (Bottom row, left), $u^\dag - u_\ell$ (Bottom row, right).}
\label{h2}
\end{figure}

\section*{Acknowledgements} 
The authors M. Hinze and T.N.T. Quyen would like to thank the
Referee and the Editor for their valuable comments and suggestions which helped to improve our paper.

\end{document}